\documentclass[notitlepage,a4paper,twoside,leqno,11pt,showkeys]{revtex4-1}
\usepackage{hyperref}
\hypersetup{colorlinks=true,allcolors=[rgb]{0,0,0.6}}
\usepackage[utf8]{inputenc}
\usepackage{color}
\definecolor{darkblue}{rgb}{0,0,0.6}
\usepackage{calc}
\usepackage[english]{babel}
\usepackage{amssymb,amsmath,amsfonts,amsthm}
\usepackage{mathrsfs}
\usepackage{dsfont}
\usepackage{enumerate}
\usepackage[shadow,textwidth=12mm,textsize=tiny,obeyDraft]{todonotes}
\usepackage{newlfont}
\usepackage[framemethod=TikZ]{mdframed}
\usepackage{enumitem}
\usepackage{tgschola}
\usepackage[T1]{fontenc}
%

\renewcommand{\Im}{\mathrm{Im}}

\hyphenation{}
\theoremstyle{plain}
\newtheorem{thm}{Theorem}[section]
\newtheorem*{thm*}{Theorem}
\newtheorem{proposition}[thm]{Proposition}

\theoremstyle{definition}

\newtheorem*{definition*}{Definition}

\newenvironment{assumption}[1]{\innerassumption}{\endinnerassumption}

\theoremstyle{remark}

\newtheorem{remark}[thm]{Remark}
\newtheorem*{remark*}{Remark}
\newmdenv[
  leftmargin=\leftmargin,
  rightmargin=\leftmargin,
  topline=false,
  bottomline=false,
  skipabove=\topsep,
  skipbelow=\topsep
]{mdquote}
\renewcommand\thesection{\arabic{section}}

\makeatletter
\renewcommand\p@subsection{\thesection .}
\makeatother
\begin{document}
\bibliographystyle{abbrvnat}
\title{Self-Adjointness criterion for operators in Fock spaces}
\date{\today}


\author{Marco Falconi}
\email{marco.falconi@univ-rennes1.fr}

\affiliation{IRMAR and Centre Henri Lebesgue; Université de Rennes I\\
  Campus de Beaulieu, 263 avenue du Général Leclerc\\ CS 74205, 35042
  RENNES Cedex}
\keywords{Essential Self-Adjointness, Fock Spaces, Interacting Quantum Field Theories, Nelson Hamiltonian, Pauli-Fierz Hamiltonian.}

\begin{abstract}
  In this paper we provide a criterion of essential self-adjointness
  for operators in the tensor product of a separable Hilbert space and
  a Fock space. The class of operators we consider may contain a
  self-adjoint part, a part that preserves the number of Fock space
  particles and a non-diagonal part that is at most quadratic with
  respect to the creation and annihilation operators. The hypotheses
  of the criterion are satisfied in several interesting applications.
\end{abstract}

\maketitle

\section{Introduction}
\label{sec:introduction}

Let $\mathscr{H}_1$, $\mathscr{H}_2$ be separable Hilbert spaces. We
consider the following space:
\begin{equation}
  \label{eq:2}
  \mathscr{H}=\mathscr{H}_1\otimes \Gamma_s(\mathscr{H}_2)\; ;
\end{equation}
where $\Gamma_s(\mathscr{K})$ is the symmetric Fock space based on
$\mathscr{K}$ \citep[see][for mathematical presentations of Fock
spaces and second quantization]{MR0493420,MR3060648,MR0044378}. The
symmetric structure of the Fock space does not play a role in the
argument: in principle it is possible to formulate the same criterion
for anti-symmetric Fock spaces
$\mathscr{H}_1\otimes\Gamma_a(\mathscr{H}_2)$. We focus on symmetric
spaces, the corresponding antisymmetric results should be deduced
without effort.

We are interested in proving a criterion of essential self-adjointness
for densely defined operators of the form:
\begin{equation}
  \label{eq:1}
  H= H_{01}\otimes 1 + 1\otimes H_{02}+H_I\; ;
\end{equation}
with suitable assumptions on $H_{01}$, $H_{02}$ and $H_I$. Operators
based on these spaces and with such structure are crucial in physics,
to describe the quantum dynamics of interacting particles and fields.

Self-adjointness of operators in Fock spaces has been widely studied,
in particular in the context of Constructive Quantum Field Theory
\citep[e.g.][]{MR947959,MR0215585,MR0210416,MR0270671,MR0266533,Se}
and Quantum ElectroDynamics
\citep[e.g.][]{nelson:1190,MR1891842,MR2097788,BFS,BFS2,BFSS,MR797278,MR1809881,MR2436496}.
A variety of advanced tools has been utilized, for even ``simple''
systems present technical difficulties to overcome: many questions
still remain unsolved.

In some favourable situations, however, it is possible to take
advantage of the peculiar structure of the Fock space and prove
essential self-adjointness with almost no effort. The idea first
appeared in a paper by \citet{MR0266533}; and the author utilized it
in \citep{falconi-phd,Ammari:2014aa} for the Nelson model with cut
off: essential self-adjointness can be proved with less assumptions
than using the Kato-Rellich Theorem (and that becomes particularly
significative in dimension two), see
Section~\ref{sec:nels-type-hamilt}. Another remarkable application is
the Pauli-Fierz Hamiltonian describing particles coupled with a
radiation field. For general coupling constants, essential
self-adjointness has been first proved in a probabilistic setting,
using stochastic integration \citep{MR1773809,MR1891842}. In this
paper we prove the same result directly in
Section~\ref{sec:pauli-fierz-hamilt}, applying the criterion
formulated in Assumptions~\ref{ass:1}, \ref{ass:2} and
Theorem~\ref{thm:1}.

In the literature, self-adjointness of operators in Fock spaces has
been studied using various tools of functional analysis: the
Kato-Rellich and functional integration arguments mentioned above are
two examples, as well as the Nelson commutator theorem
\citep[][]{MR1814991}. For each particular system, a strategy is
utilized ad hoc: the more complicated is the correlation between
$\mathscr{H}_1$ and $\Gamma_s(\mathscr{H}_2)$, the more difficult is
the strategy. We realized that, if we take suitable advantage of the
fibered structure of the Fock space, the type of interaction between
the spaces is not so relevant. This was a strong motivation to study
the problem from a general perspective. Due to the variety of possible
applications, an effort has been made to formulate the necessary
assumptions in a general form. Roughly speaking, the essential
requirement is that the part of $H_I$ that does not commute with the
number operator of $\Gamma_s(\mathscr{H}_2)$ is at most quadratic with
respect to the creation and annihilation operators. As anticipated,
the space $\mathscr{H}_1$ does not play a particular role, as long as
$H_I$ behaves sufficiently well with respect to $H_{01}$.

\subsection{Paper organization.}
\label{sec:paper-organization}

In Section~\ref{sec:defin-notat} we introduce the notation, and recall
some basic definitions of operators in Fock spaces. In
Section~\ref{sec:assumptions-h} we formulate the necessary assumptions
on the operator $H$. In Section~\ref{sec:essent-self-adjo} we prove
the criterion. In Section~\ref{sec:applications} we outline some of
the most interesting applications. Finally in
Section~\ref{sec:conclusive-remarks-1} we give some conclusive
remarks, and an extension of the criterion to semi-bounded
quartic operators.

\subsection{Definitions and notations.}
\label{sec:defin-notat}

\begin{itemize}[label=\textcolor{darkblue}{\textbullet},itemsep=10pt]
\item Let $\mathscr{K}$ be a separable Hilbert space. Then the
  symmetric Fock space $\Gamma_s(\mathscr{K})$ is defined as the
  direct sum:
  \begin{equation*}
    \Gamma_s(\mathscr{K})=\bigoplus_{n=0}^{\infty}\mathscr{K}^{\otimes_s n}\; ,
  \end{equation*}
  where $\mathscr{K}^{\otimes_s n}$ is the $n$-fold symmetric tensor
  product of $\mathscr{K}$, and $\mathscr{K}^{\otimes_s 0}:=\mathds{C}$.
\item Let $h:\mathscr{K}\supseteq D(h)\to \mathscr{K}$ be a densely
  defined self-adjoint operator on a separable Hilbert space
  $\mathscr{K}$. Its second quantization $d\Gamma(h)$ is the
  self-adjoint operator on $\Gamma_s(\mathscr{K})$ defined by
  \begin{equation*}
    d\Gamma(h)\rvert_{D(h)^{\otimes_s n}}=\sum_{k=1}^n 1\otimes\dotsm\otimes \underbrace{h}_{k}\otimes \dotsm\otimes 1\; .
  \end{equation*}
  Let $u$ be a unitary operator on $\mathscr{K}$. We define
  $\Gamma(u)$ to be the unitary operator on $\Gamma_s(\mathscr{K})$
  given by
  \begin{equation*}
    \Gamma(u)\rvert_{\mathscr{K}^{\otimes_s n}}=\bigotimes_{k=1}^n u\; .
  \end{equation*}
  If $e^{it h}$ is a group of unitary operators on $\mathscr{K}$,
  $\Gamma(e^{it h})=e^{it d\Gamma(h)}$.
\item $N:=d\Gamma (1)$ the number operator of
  $\Gamma_s(\mathscr{H}_2)$.
\item $H_0:= H_{01}\otimes 1 + 1\otimes H_{02}$; the free Hamiltonian.
\item If $X$ is a self-adjoint operator on a Hilbert space, we denote
  by $D(X)$ its domain, by $q_X(\cdot ,\cdot)$ the form associated
  with $X$ and by $Q(X)$ the form domain.
\item Let $\mathscr{K}$ be a Hilbert space;
  $\{\mathscr{K}^{(j)}\}_{j\in\mathds{N}}$ a collection of disjoint
  subspaces of $\mathscr{K}$; $X$ an operator densely defined on
  $\mathscr{K}$. We say that $\{\mathscr{K}^{(j)}\}_{j\in\mathds{N}}$
  is invariant for $X$ if $\forall j\in \mathds{N}$, $X$ maps
  $D(X)\cap \mathscr{K}^{(j)}\to \mathscr{K}^{(j)}$, and $D(X)\cap
  \mathscr{K}^{(j)}$ is dense in $\mathscr{K}^{(j)}$.
\item Let $\mathscr{K}$ be a Hilbert space;
  $\{\mathscr{K}^{(j)}\}_{j\in\mathds{N}}$ a collection of disjoint
  closed subspaces of $\mathscr{K}$ such that
  $\bigoplus_{j\in\mathds{N}}\mathscr{K}^{(j)}=\mathscr{K}$. Then we
  call the collection complete, and we define the dense subset
  $f_0(\mathscr{K}^{(\cdot)})$ of $\mathscr{K}$ as:
  \begin{equation}
    \label{eq:3}
    f_0(\mathscr{K}^{(\cdot)})=\Bigl\{\phi\in\mathscr{K}, \exists n\in\mathds{N}\text{ s.t. }\phi\in\bigoplus_{j=0}^n\mathscr{K}^{(j)} \Bigr\}\; .
  \end{equation}
  Also, we denote by $\mathds{1}_j(\mathscr{K}^{(\cdot)})$ the
  orthogonal projection on $\mathscr{K}^{(j)}$, by $\mathds{1}_{\leq
    n}(\mathscr{K}^{(\cdot)})$ the orthogonal projection on
  $\bigoplus_{j=0}^n\mathscr{K}^{(j)}$.
\item Let $\mathscr{K}\ni f,g$ be two elements of a separable Hilbert
  space. We define the creation $a^{*}(f)$ and annihilation $a(f)$
  operators on $\Gamma_s(\mathscr{K})$ by their action on $n$-fold
  tensor products (with $a(f)\phi_0=0$ for any $\phi_0\in
  \mathscr{K}^{\otimes_s 0}=\mathds{C}$):
  \begin{align*}
    a(f)g^{\otimes n}&=\sqrt{n} \; \langle f , g\rangle_{\mathscr{K}} \; g^{\otimes (n-1)}\\
    a^{*}(f)g^{\otimes n}&=\sqrt{n+1} \; f\otimes_s g^{\otimes n}\; .
  \end{align*}
  They extend to densely defined closed operators and are adjoint of
  each other: we denote again by $a^{\#}(f)$ their closures. For any
  $f\in \mathscr{K}$, $D(a^{*}(f))=D(a(f))$ with
  \begin{equation*}
    D(a(f))=\Bigl\{\phi\in \Gamma_s(\mathscr{K})\;:\; \sum_{n=0}^{\infty}(n+1)\lVert \langle f(x) , \phi_{n+1}(x,X_n)\rangle_{\mathscr{K}(x)} \rVert_{\mathscr{K}^{\otimes_s n}(X_n)}^2<+\infty \Bigr\}\; ,
  \end{equation*}
  where $\phi_{n+1}=\phi\bigr\rvert_{\mathscr{K}^{\otimes_s n+1}}$;
  also $D(a(f))\supset D(d\Gamma(1)^{1/2})$, $D(a(f))\supset
  f_0(\mathscr{K}^{(\cdot)})$. They satisfy the Canonical
  Commutation Relations $[a(f_1),a^{*}(f_2)]=\langle f_1 ,
  f_2\rangle_{\mathscr{K}}$ on suitable domains
  (e.g. $f_0(\mathscr{K}^{(\cdot)})$).
\item We decompose $\Gamma_s(\mathscr{H}_2)$ in its subspaces with
  fixed number of particles as usual: $\forall n\in \mathds{N}$,
  define $\mathscr{H}_2^{(n)}:=\mathscr{H}_2^{\otimes_s n}$, with the
  convention $\mathscr{H}_2^{(0)}=\mathds{C}$. Then
  $\{\mathscr{H}^{(n)}_{2}\}_{n\in\mathds{N}}$ is a complete
  collection of closed disjoint subspaces of
  $\Gamma_{s}(\mathscr{H}_2)$ invariant for $N$.
\item Let $X$ be an operator on $\mathscr{H}$. We say that $X$ is
  diagonal if
  $\{\mathscr{H}_1\otimes\mathscr{H}^{(n)}_{2}\}_{n\in\mathds{N}}$ is
  invariant for $X$; $X$ is non-diagonal if for all $n\in\mathds{N}$
  and $\phi\in D(X)\cap \mathscr{H}_1\otimes\mathscr{H}^{(n)}_{2}$,
  $X\phi \notin \mathscr{H}_1\otimes\mathscr{H}^{(n)}_{2}$.
\end{itemize}

\section{Assumptions on $H$}
\label{sec:assumptions-h}

In this section we discuss Assumptions~\ref{ass:1}
and~\ref{ass:2}(\ref{ass:3}). In Section~\ref{sec:applications} below
they are checked in concrete examples.

We recall that our Hilbert space $\mathscr{H}$ has the form
\begin{equation*}
  \mathscr{H}=\mathscr{H}_1\otimes \Gamma_s(\mathscr{H}_2)\; ;  
\end{equation*}
while the operator is
\begin{equation*}
  H= H_{01}\otimes 1 + 1\otimes H_{02}+H_I\; .
\end{equation*}

We separate the assumptions on $H_0$ from the ones on $H_I$, to
improve readability. On $H_I$ we require either Assumption~\ref{ass:2}
or Assumption~\ref{ass:3}. In \ref{ass:2} the non-diagonal part of
$H_I$ can be more singular: that restricts the diagonal part to be at
most quadratic in the creation and annihilation operators. In
\ref{ass:3} on the other hand is assumed more regularity on the
non-diagonal part of $H_I$, allowing for a more singular diagonal
part.

\begin{assumption}{A$_{0}$}
  \label{ass:1}
  $H_{01}$ and $H_{02}$ are semi-bounded self-adjoint operators. We
  denote respectively by $-M_1$ and $-M_2$ their lower
  bounds. Furthermore, $\forall t\in \mathds{R}$,
  $\{\mathscr{H}_2^{(n)}\}_{n\in\mathds{N}}$ is invariant for $e^{it
    H_{02}}$.
\end{assumption}

This is quite natural. In physical systems the Hamiltonian is often
split in a part describing the free dynamics (usually a
self-adjoint and positive unbounded operator), and an interaction
part. The invariance of the $n$-particles subspaces is also a usual
feature of free quantum theories: let $h_{02}$ be a semi-bounded
self-adjoint operator on the one-particle space $\mathscr{H}_2$; then
the second quantization $d\Gamma(h_{02})$ is self-adjoint, and the
group $\Gamma(e^{it h_{02}})$ generated by it satisfies the
assumption.

\begin{assumption}{A$_I$}
  \label{ass:2}
  $H_I$ is a symmetric operator on $\mathscr{H}$, with a domain of
  definition $D(H_I)$ such that $D(H_0)\cap D(H_I)$ is dense in
  $\mathscr{H}$. Furthermore $\forall\phi\in Q(H_{01}\otimes 1)\cap Q(1\otimes H_{02})\cap
  \mathscr{H}_1\otimes \mathscr{H}_2^{(n)}$,
  \begin{equation}
    \label{eq:5}
    H_I\,\phi\in \bigoplus_{i=-2}^{2}\mathscr{H}_1\otimes \mathscr{H}_2^{(n+i)}\; .
  \end{equation}
  Also, $H_I$ satisfies the following bound: $\forall
  n\in\mathds{N}$ $\exists C>0$ such that $\forall\psi\in
  \mathscr{H}$, $\forall\phi\in Q(H_{01}\otimes 1)\cap Q(1\otimes
  H_{02})\cap\mathscr{H}_1\otimes \mathscr{H}_2^{(n)}$:
  \begin{equation}
    \label{eq:6}
    \begin{split}
      \lvert \langle \psi , H_I\phi \rangle_{\mathscr{H}}\rvert_{}^2\leq C^2\sum_{i=-2}^2\lVert \psi_{n+i} \rVert_{\mathscr{H}_1\otimes\mathscr{H}_2^{(n+i)}}^2 \Bigl[(n+1)^2\lVert \phi \rVert_{\mathscr{H}_1\otimes\mathscr{H}_2^{(n)}}^{2}+(n+1)\Bigl(q_{H_{01}\otimes 1}(\phi,\phi)\\+q_{1\otimes H_{02}}(\phi,\phi)+(\lvert M_1\rvert_{}^{}+\lvert M_2\rvert_{}^{}+1)\lVert \phi \rVert_{\mathscr{H}_1\otimes\mathscr{H}_2^{(n)}}^{2} \Bigr)\Bigr]\; ; 
    \end{split}
  \end{equation}
  where we define
  $\psi_n:=1\otimes
  \mathds{1}_n(\mathscr{H}_2^{(\cdot)})\psi$.
\end{assumption}

Consider Assumption~\ref{ass:2}. First of all, $H_I$ has to be
sufficiently regular, i.e. relatively bounded by $H_0$ (in some sense)
when restricted to the subspaces
$\mathscr{H}_1\otimes \mathscr{H}_2^{(n)}$. Essentially, we require
that $H_I$ is at most quadratic in the annihilation and creation
operators, as reflected by the $n$-dependence in \eqref{eq:6}.
\begin{assumption}{A$_I^{\prime}$}
  \label{ass:3}
  $H_I$ is a symmetric operator on $\mathscr{H}$, with a domain of
  definition $D(H_I)$ such that $D(H_0)\cap D(H_I)$ is dense in
  $\mathscr{H}$. Furthermore $\forall\phi\in Q(H_{01}\otimes 1)\cap Q(1\otimes H_{02})\cap
  \mathscr{H}_1\otimes \mathscr{H}_2^{(n)}$,
  \begin{equation}
    \label{eq:24}
    H_I\,\phi\in \bigoplus_{i=-2}^{2}\mathscr{H}_1\otimes \mathscr{H}_2^{(n+i)}\; .
  \end{equation}
  Also, $H_I=H_{diag}+H_2$ with the following properties:
  \begin{enumerate}[label=\textcolor{darkblue}{\roman*)},itemsep=10pt]
  \item \label{item:1} $H_{diag}$ is diagonal; $H_2$ is non-diagonal.
  \item\label{item:2} $H_{diag}$ satisfies the following bound.
    $\forall n\in \mathds{N}$ $\exists C(n)>0$ such that
    $\forall\psi \in\mathscr{H}$, $\forall\phi\in Q(H_{01}\otimes
    1)\cap Q(1\otimes H_{02})\cap\mathscr{H}_1\otimes
    \mathscr{H}_2^{(n)}$:
    \begin{equation}
      \label{eq:16}
      \begin{split}
        \lvert \langle \psi , H_{diag}\phi \rangle_{\mathscr{H}}\rvert_{}^2\leq C^2(n)\lVert \psi_{n} \rVert_{\mathscr{H}_1\otimes\mathscr{H}_2^{(n)}}^2 \Bigl(q_{H_{01}\otimes 1}(\phi,\phi)+q_{1\otimes H_{02}}(\phi,\phi)+(\lvert M_1\rvert_{}^{}+\lvert M_2\rvert_{}^{}\\+1)\lVert \phi \rVert_{\mathscr{H}_1\otimes\mathscr{H}_2^{(n)}}^{2} \Bigr)\; .  
      \end{split}
    \end{equation}
  \item\label{item:3} $H_2$ satisfies the following bound. $\forall
    n\in \mathds{N}$ $\exists C>0$ such that $\forall\psi
    \in\mathscr{H}$, $\forall\phi\in \mathscr{H}_1\otimes
    \mathscr{H}_2^{(n)}$:
    \begin{equation}
      \label{eq:19}
      \begin{split}
        \lvert \langle \psi , H_2\phi \rangle_{\mathscr{H}}\rvert\leq C(n+1) \lVert \phi \rVert_{\mathscr{H}_1\otimes\mathscr{H}_2^{(n)}}\sum_{\substack{i=-2 \\ i\neq 0}}^2\lVert \psi_{n+i} \rVert_{\mathscr{H}_1\otimes\mathscr{H}_2^{(n+i)}} \; . 
      \end{split}
    \end{equation}
  \end{enumerate}
\end{assumption}

Assumption~\ref{ass:3} is similar to Assumption~\ref{ass:2}. However
since the non-diagonal quadratic part $H_2$ is more regular than
before, we can be less demanding on the diagonal part $H_{diag}$: it
has still to be bounded in a suitable sense by $H_0$, but it can be
non-quadratic with respect to the creation and annihilation operators.

\begin{remark}
  In some applications, there is a decomposition of $\mathscr{H}_{1}$
  invariant for $H$. For example, it may happen that $\mathscr{H}_1$
  is also a Fock space but $H$ leaves invariant each sector with fixed
  number of particles. In this situation, we can prove essential
  self-adjointness with little less regularity on the assumptions. In
  particular, Assumption~\ref{ass:2} would be changed in:
  \begin{mdquote}
  $H_I$ is a symmetric operator on $\mathscr{H}$, with a domain of
  definition $D(H_I)$ such that $D(H_0)\cap D(H_I)$ is dense in
  $\mathscr{H}$. Furthermore there exists a complete collection
  $\{\mathscr{H}_{1}^{(j)}\otimes \Gamma_s(\mathscr{H}_2)
  \}_{j\in\mathds{N}}$
  invariant for $H_0$ and $H_I$ such that:
  $\forall\phi\in Q(H_{01}\otimes 1)\cap Q(1\otimes H_{02})\cap
  \mathscr{H}_1^{(j)}\otimes \mathscr{H}_2^{(n)}$,
  \begin{equation*}
    H_I\,\phi\in \bigoplus_{i=-2}^{2}\mathscr{H}_1^{(j)}\otimes\mathscr{H}_2^{(n+i)}\; .
  \end{equation*}
  Also, $H_I$ satisfies the following bound:
  $\forall j,n\in\mathds{N}$ $\exists C(j)>0$ such that
  $\forall\psi\in \mathscr{H}$,
  $\forall\phi\in Q(H_{01}\otimes 1)\cap Q(1\otimes
  H_{02})\cap\mathscr{H}_1^{(j)}\otimes \mathscr{H}_2^{(n)}$:
  \begin{equation*}
    \begin{split}
      \lvert \langle \psi , H_I\phi\rangle_{\mathscr{H}}\rvert_{}^2\leq C^2(j)\sum_{i=-2}^2\lVert\psi_{j,n+i}\rVert_{\mathscr{H}_1^{(j)}\otimes\mathscr{H}_2^{(n+i)}}^2\Bigl[(n+1)^2\lVert \phi\rVert_{\mathscr{H}_1^{(j)}\otimes\mathscr{H}_2^{(n)}}^{2}+(n+1)\Bigl(q_{H_{01}\otimes 1}(\phi,\phi)\\+q_{1\otimes H_{02}}(\phi,\phi)+(\lvert M_1\rvert_{}^{}+\lvert M_2\rvert_{}^{}+1)\lVert \phi\rVert_{\mathscr{H}_1^{(j)}\otimes\mathscr{H}_2^{(n)}}^{2}\Bigr)\Bigr]\; ;
    \end{split}
  \end{equation*}
  where we define
  $\psi_{j,n}:=\mathds{1}_j(\mathscr{H}_1^{(\cdot)})\otimes
  \mathds{1}_n(\mathscr{H}_2^{(\cdot)})\psi$.
  \end{mdquote}
  \begin{samepage}
    Theorem~\ref{thm:1} would then read:
  \begin{mdquote}
    Assume \ref{ass:1} and \ref{ass:2}(\ref{ass:3}). Then $H$ is
    essentially self adjoint on
    $D(H_{01}\otimes 1)\cap D(H_{02}\otimes 1)\cap
    f_0(\mathscr{H}_1^{(\cdot)}\otimes \mathscr{H}_2^{(\cdot)})$.
  \end{mdquote}
  \end{samepage}
\end{remark}

\section{Direct proof of self-adjointness}
\label{sec:essent-self-adjo}

In this section we present the criterion of essential self-adjointness
. The strategy is to prove that $\mathrm{Ran}(H\pm i)$ is dense in
$\mathscr{H}$, by an argument of reductio ad absurdum. As already
discussed, the non-diagonal part of $H_I$ is at most quadratic with
respect to the annihilation and creation operators of
$\Gamma_s(\mathscr{H}_2)$, and that plays a crucial role in the
proof. We prove Theorem~\ref{thm:1} assuming~\ref{ass:2}; the other
case being analogous.

\begin{thm}
  \label{thm:1}
  Assume \ref{ass:1} and \ref{ass:2}(\ref{ass:3}). Then $H$ is
  essentially self adjoint on $D(H_{01}\otimes 1)\cap D(H_{02}\otimes
  1)\cap \mathscr{H}_1\otimes f_0( \mathscr{H}_2^{(\cdot)})$.
\end{thm}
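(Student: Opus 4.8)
The plan is to use the basic criterion that $H$, a symmetric operator, is essentially self-adjoint on a core $D$ if and only if $\mathrm{Ran}(H\pm i)$ is dense in $\mathscr{H}$; equivalently, the only $\psi\in\mathscr{H}$ with $(H^{*}\mp i)\psi=0$ is $\psi=0$. So suppose for contradiction that there is $\psi\neq 0$ with $\langle\psi,(H\pm i)\phi\rangle=0$ for all $\phi$ in the proposed core $D:=D(H_{01}\otimes 1)\cap D(H_{02}\otimes 1)\cap \mathscr{H}_1\otimes f_0(\mathscr{H}_2^{(\cdot)})$. Writing $\psi=\sum_{n}\psi_n$ with $\psi_n\in\mathscr{H}_1\otimes\mathscr{H}_2^{(n)}$, I would test this identity against vectors $\phi$ supported in a single sector $\mathscr{H}_1\otimes\mathscr{H}_2^{(n)}$; by Assumption~\ref{ass:2} (the band structure \eqref{eq:5}) such a $\phi$ is sent by $H_I$ into the five neighboring sectors $n-2,\dots,n+2$, while $H_0$ is diagonal, so the orthogonality condition couples $\psi_{n-2},\dots,\psi_{n+2}$ through a five-term recurrence.

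The heart of the argument is to turn this recurrence into an a priori bound forcing all $\psi_n$ to vanish. First I would establish that each $\psi_n$ lies in the appropriate form domain: choosing $\phi=(H_{0}\mp i)^{-1}\chi$ for $\chi\in\mathscr{H}_1\otimes\mathscr{H}_2^{(n)}$ and using that $H_0$ restricted to a sector is semibounded self-adjoint (Assumption~\ref{ass:1} guarantees $H_{02}$ preserves sectors, so $H_0$ is diagonal), the orthogonality relation plus the quadratic-growth bound \eqref{eq:6} shows $\psi_n\in Q(H_{01}\otimes 1)\cap Q(1\otimes H_{02})$ and gives a quantitative estimate: roughly, $q_{H_0}(\psi_n,\psi_n)+\lVert\psi_n\rVert^{2}$ is controlled by $(n+1)^{2}\sum_{|i|\le 2}\lVert\psi_{n+i}\rVert^{2}$ up to constants (shifting the roles of $\psi$ and $\phi$ in \eqref{eq:6}). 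Next I would exploit the $(n+1)$-versus-$(n+1)^2$ split in \eqref{eq:6}: the genuinely dangerous, most singular term carries only one power of $(n+1)$ multiplying the form, and can be absorbed into $q_{H_0}(\psi_n,\psi_n)$ by a Cauchy–Schwarz/Young inequality with a small parameter, leaving a clean recursive inequality of the form $\lVert\psi_n\rVert^2 \lesssim (n+1)^{2}\sum_{0<|i|\le 2}\lVert\psi_{n+i}\rVert^{2}$ (after the off-diagonal pieces). Since $\psi\in\mathscr{H}$ means $\sum_n\lVert\psi_n\rVert^2<\infty$, a summation/telescoping argument — multiplying the inequality by a suitable slowly growing weight and summing over $n$, or running the recurrence from large $n$ downward — should propagate the smallness and force $\psi_n=0$ for all $n$, contradicting $\psi\neq 0$. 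In the variant Assumption~\ref{ass:3} the diagonal part $H_{diag}$ is handled with the same absorption trick via \eqref{eq:16}, and the non-diagonal $H_2$, being genuinely quadratic with the better bound \eqref{eq:19}, gives directly the off-diagonal terms without needing the extra $(n+1)$.

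Two technical points need care. One is justifying that $\psi$ actually belongs to the form domain of $H_0$ sector by sector and that the manipulations of quadratic forms (in particular writing $q_{H_0}(\psi_n,\phi_n)=\langle\psi_n,H_0\phi_n\rangle$ when $\phi_n\in D(H_0)$) are legitimate; this is where the density of $D(H_0)\cap D(H_I)$ and the fact that $\mathscr{H}_1\otimes f_0(\mathscr{H}_2^{(\cdot)})$ meets each sector in a form core are used. The other, and I expect this to be the main obstacle, is closing the recurrence: a five-term recurrence with coefficients growing like $(n+1)^{2}$ is not obviously stable, and one must use that $\psi\in\ell^2$ in $n$ together with the precise structure (the square root of the form appearing, the absorbability of the worst term) to get a genuine contradiction rather than merely a consistency condition. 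I would try a weighted sum $\sum_n w_n\lVert\psi_n\rVert^2$ with $w_n$ chosen so that the cross terms telescope, or alternatively a Gronwall-type induction showing $\lVert\psi_n\rVert$ is forced to decay faster than any summable sequence allows unless it is identically zero; the fact that the quadratic-in-$(n+1)$ term is multiplied by $\lVert\psi\rVert^2$ on the \emph{test} side and can be paired against the $(n+1)$-linear form term is exactly the leverage that makes this work, and is the reason the hypothesis insists the non-diagonal part be at most quadratic.
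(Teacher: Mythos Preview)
Your overall strategy is the same as the paper's: assume a deficiency vector $\psi$, show each $\psi_n$ lies in the form domain of $H_0$ (the paper does this via Riesz's Lemma on the Hilbert space $\mathscr{X}_n$ with inner product $q_{H_0}+(\lvert M_1\rvert+\lvert M_2\rvert+1)\langle\cdot,\cdot\rangle$, which is equivalent to your resolvent trick), and then exploit the band structure of $H_I$ to derive an inequality that forces $\psi=0$. However, the precise mechanism you propose for the last step has a real gap.

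The single-sector recurrence you aim for, $\lVert\psi_n\rVert^2\lesssim(n+1)^2\sum_{0<|i|\le2}\lVert\psi_{n+i}\rVert^2$, is \emph{not} strong enough to conclude $\psi=0$ from $\sum_n\lVert\psi_n\rVert^2<\infty$: the sequence $a_n=(n+1)^{-4}$ is summable and satisfies $a_n\le K(n+1)^2\sum_{0<|i|\le2}a_{n+i}$ for $K$ large, so no choice of weight $w_n$ in $\sum w_n a_n$ will produce a contradiction from that inequality alone. The paper's decisive idea, which you are missing, is to test not with a single-sector $\psi_n$ but with the \emph{cumulative truncation} $\psi_{\le n}=\sum_{m\le n}\psi_m$. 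Taking the imaginary part of $\langle\psi,(H-z)\psi_{\le n}\rangle=0$ kills the real form $q_{H_0}(\psi_{\le n},\psi_{\le n})$ and leaves
\[
\Im(z)\,\lVert\psi_{\le n}\rVert^2=\Im\langle\psi-\psi_{\le n},\,H_I\psi_{\le n}\rangle,
\]
and by the band structure only the ``boundary'' pairings between $\psi_{n+1},\psi_{n+2}$ and $\psi_{n-1},\psi_n$ survive. This puts the \emph{partial sum} $\sum_{m\le n}\lVert\psi_m\rVert^2$ on the left side rather than the single $\lVert\psi_n\rVert^2$. After bounding the right side with~\eqref{eq:6} and dividing by $(n+1)$, the sum over $n\ge n^{*}$ of the left side is $\gtrsim S\sum_{n\ge n^{*}}(n+1)^{-1}$, which diverges, while the right side is controlled by $S$ together with $S_\alpha:=\sum_n(n+\alpha)^{-1}\lVert\psi_n\rVert_{\mathscr{X}_n}^2$; the finiteness of $S_\alpha$ is established separately (essentially your form-domain estimate, iterated with a Young-inequality absorption). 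The divergence of the harmonic series is precisely the leverage that forces $S=0$, and it is available only because the left side carries a partial sum. Your single-sector recurrence cannot reproduce this.
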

\begin{proof}
  Let $\psi\in\mathscr{H}$, $z\in \mathds{C}$ with $\Im z\neq
  0$. Suppose that $\forall \phi\in D(H_{01}\otimes 1)\cap D(1\otimes
  H_{02})\cap \mathscr{H}_1\otimes f_0(\mathscr{H}_2^{(\cdot)})$:
  \begin{equation}
    \label{eq:7}
    \langle \psi , (H-z)\phi\rangle_{\mathscr{H}}=0\; .
  \end{equation}
  Then it suffices to show that $\psi = 0$. This is done in few
  steps. Let $n\in \mathds{N}$ and $\phi_{n}\in D(H_{01}\otimes
  1)\cap D(1\otimes H_{02})\cap \mathscr{H}_1\otimes
  \mathscr{H}_2^{(n)}$. For all $n\in\mathds{N}$, the space
  $Q(H_{01}\otimes 1)\cap Q(1\otimes H_{02})\cap
  \mathscr{H}_1\otimes \mathscr{H}_2^{(n)}$ with the scalar
  product:
  \begin{equation}
    \label{eq:8}
    \langle \,\cdot \, ,\, \cdot\,\rangle_{\mathscr{X}_{n}}= q_{H_{01}\otimes 1}(\,\cdot\, ,\,\cdot\,)+q_{1\otimes H_{02}}(\,\cdot\, ,\, \cdot\,) +(\lvert M_1\rvert_{}^{}+\lvert M_2\rvert_{}^{}+1)\langle \, \cdot \, ,\, \cdot\,\rangle_{\mathscr{H}_1\otimes \mathscr{H}_2^{(n)}}
  \end{equation}
  is complete, and therefore a Hilbert space. We denote it by
  $\mathscr{X}_{n}$. Then~\eqref{eq:7} together with
  Assumption~\ref{ass:1} imply, since $\phi_{n}\in D(H_{01}\otimes
  1)\cap D(1\otimes H_{02})$:
  \begin{equation}
    \label{eq:9}
    \langle \psi_{n}, \phi_{n} \rangle_{\mathscr{X}_{n}}= (z+\lvert M_1\rvert_{}^{}+\lvert M_2\rvert_{}^{}+1)\langle \psi_{n} ,\phi_{n}\rangle_{\mathscr{H}_1\otimes \mathscr{H}_2^{(n)}}-\langle \psi , H_I\phi_{n} \rangle_{\mathscr{H}}\; .
  \end{equation}
  Use bound~\eqref{eq:16} and then Riesz's Lemma on
  $\mathscr{X}_{n}$: it follows that $\psi_{n}\in Q(H_{01}\otimes
  1)\cap Q(1\otimes H_{02})\cap \mathscr{H}_1\otimes
  \mathscr{H}_2^{(n)}$ for any $n\in\mathds{N}$.

  Let
  $\phi\in Q(H_{01}\otimes 1)\cap Q(1\otimes H_{02})\cap
  \mathscr{H}_1\otimes f_0(\mathscr{H}_2^{(\cdot)})$.
  Then $\exists \{\phi^{(\alpha)}\}_{\alpha\in\mathds{N}}$ such that
  $\forall \alpha\in\mathds{N}$,
  $\phi^{(\alpha)}\in D(H_{01}\otimes 1)\cap D(1\otimes H_{02})\cap
  \mathscr{H}_1\otimes f_0( \mathscr{H}_2^{(\cdot)})$;
  and $\forall n\in\mathds{N}$, $\phi_{n}^{(\alpha)}\to \phi_{n}$
  in the topology induced by
  $\lVert \,\cdot\, \rVert_{\mathscr{X}_{n}}^{}$. Furthermore
  $\forall\alpha\in\mathds{N}$:
  \begin{equation}
    \label{eq:11}
    \langle \psi , (H-z)\phi^{(\alpha)} \rangle_{\mathscr{H}}=0\; .
  \end{equation}
  Since
  $\psi_{n}\in Q(H_{01}\otimes 1)\cap Q(1\otimes H_{02})\cap
  \mathscr{H}_1\otimes \mathscr{H}_2^{(n)}$,
  we can take the limit of~\eqref{eq:11} and obtain,
  $\forall \phi\in Q(H_{01}\otimes 1)\cap Q(1\otimes H_{02})\cap
  \mathscr{H}_1\otimes f_0( \mathscr{H}_2^{(\cdot)})$:
  \begin{equation}
    \label{eq:12}
    q_{H_{01}\otimes 1}(\psi,\phi)+q_{1\otimes H_{02}}(\psi,\phi)+\langle \psi , H_I\phi \rangle_{\mathscr{H}}=z\langle \psi , \phi\rangle_{\mathscr{H}}\; .
  \end{equation}
  Hence we can choose
  $\phi=\psi_{\leq n}:= 1\otimes \mathds{1}_{\leq
    n}(\mathscr{H}_2^{(\cdot)})\psi$
  in~\eqref{eq:12}. Then, using Assumption~\ref{ass:1} and taking the
  imaginary part we obtain:
  \begin{equation}
    \label{eq:15}
    \Im (z) \langle \psi_{\leq n} , \psi_{\leq n}\rangle=\Im (\langle  \psi -\psi_{\leq n}, H_{I}\psi_{\leq n} \rangle_{})\; .
  \end{equation}
  Now, by Assumption~\ref{ass:2} (the equality holds on the suitable
  domain):
  \begin{equation*}
    H_I \bigl(1\otimes \mathds{1}_{\leq n}(\mathscr{H}_2^{(\cdot)})\bigr) =\bigl(1\otimes \mathds{1}_{\leq n+2}(\mathscr{H}_2^{(\cdot)})\bigr)H_I \bigl(1\otimes \mathds{1}_{\leq n}(\mathscr{H}_2^{(\cdot)})\bigr)\; .
  \end{equation*}
  Furthermore
  $1\otimes \mathds{1}_{\leq n+2}(\mathscr{H}_2^{(\cdot)})(\psi -
  \psi_{\leq n})= \psi _{n+1}\oplus \psi _{n+2}$.
  Then Equation~\eqref{eq:15} becomes:
  \begin{equation*}
    \Im (z) \langle \psi_{\leq n} , \psi_{\leq n}\rangle= \sum_{i=1}^2\Im(\langle \psi_{n+i} , H_I\psi_{\leq n}\rangle_{})\; .
  \end{equation*}
  Using the symmetry of $H_I$, and~\eqref{eq:5} we obtain:
  \begin{equation}
    \label{eq:17}
    \Im (z) \langle \psi_{\leq n} , \psi_{\leq n}\rangle=\Im(\langle \psi_{n+2} , H_I\psi_{n}\rangle+\langle \psi_{n+1} , H_I\psi_{n}\rangle+\langle \psi_{n+1} , H_I\psi_{n-1}\rangle )\; .
  \end{equation}
  Now bound~\eqref{eq:17} using~\eqref{eq:6}; then we obtain
  $\forall n\in \mathds{N}$:
  \begin{equation}
    \label{eq:18}
    \begin{split}
      \lvert \Im z\rvert\sum_{i=0}^n\lVert \psi_{i} \rVert_{}^2\leq C\Bigl[\lVert \psi_{n+1} \rVert\Bigl((n+1)\bigl(\lVert \psi_{n} \rVert+\lVert \psi_{n-1} \rVert \bigr)+\sqrt{n+1}\bigl(\lVert \psi_{n} \rVert_{\mathscr{X}_{n}}+\lVert \psi_{n-1} \rVert_{\mathscr{X}_{n-1}}\bigr)\Bigr)\\+\lVert \psi_{n+2} \rVert\Bigl((n+1)\lVert \psi_{n} \rVert+\sqrt{n+1}\lVert \psi_{n} \rVert_{\mathscr{X}_{n}}\Bigr)\Bigr]\\\leq 2C(n+1)\Bigl[ \sum_{i_1=0}^2\lVert \psi_{n+i_1} \rVert_{}^2+\sum_{i_2=-1}^0(n+1)^{-1}\lVert \psi_{n+i_2} \rVert_{\mathscr{X}_{n+i_2}}^2 \Bigr]  \; .
    \end{split}
  \end{equation}
  
  For all $\alpha >0$ define:
  \begin{equation*}
    S:= \sum_{n=0}^{\infty}\lVert \psi_{n} \rVert_{}^2\; ; \;S_\alpha:=\sum_{n=0}^{\infty}(n+\alpha)^{-1}\lVert \psi_{n}\rVert_{\mathscr{X}_{n}}^2\; .
  \end{equation*}
  $\psi\in\mathscr{H}$, hence $S$ is finite. We prove that also
  $S_{\alpha}$ is finite. Using equation~\eqref{eq:12} with
  $\phi=\psi_{n}$ we obtain, for all $n\in\mathds{N}$:
  \begin{equation}
    \label{eq:38}
    (n+\alpha)^{-1}\lVert \psi_{n} \rVert_{\mathscr{X}_{n}}^2=(n+\alpha)^{-1}(z+\lvert M_1\rvert_{}^{}+\lvert M_2\rvert_{}^{}+1)\lVert \psi_{n} \rVert_{}^2-(n+\alpha)^{-1}\langle \psi , H_I\psi_{n}\rangle_{}\; .
  \end{equation}
  Now, we can use bound~\eqref{eq:6} on
  $(n+\alpha)^{-1}\lvert \langle \psi ,
  H_I\psi_{n}\rangle\rvert_{}^{}$, obtaining
  \begin{equation}
    \label{eq:39}
    \begin{split}
      (n+\alpha)^{-2}\lvert \langle \psi , H_I\psi_{n} \rangle_{\mathscr{H}}\rvert_{}^2\leq C^2 (n+\alpha)^{-2}\sum_{i=-2}^2\lVert \psi_{n+i} \rVert^2 \Bigl[(n+1)^2\lVert \psi_{n} \rVert^2+(n+1)\lVert \psi_{n} \rVert_{\mathscr{X}_{n}}^2\Bigr]\\\leq C^2(\alpha)\sum_{i=-2}^2\lVert \psi_{n+i} \rVert_{}^2 \Bigl[\lVert \psi_{n} \rVert_{}^2+(n+\alpha)^{-1}\lVert \psi_{n} \rVert_{\mathscr{X}_{n}}^2\Bigr]\; ,      
    \end{split}
  \end{equation}
  for some $C(\alpha)>0$. The only terms we need to deal with are
  $(n+\alpha)^{-1}\lVert \psi_{n+i} \rVert_{}^2 \lVert \psi_{n}
  \rVert_{\mathscr{X}_{n}}^2$.
  We use the fact that for any $\varepsilon,a,b>0$,
  $ab\leq \frac{1}{2}(\varepsilon a^2+\frac{1}{\varepsilon}b^2)$,
  obtaining
  \begin{equation}
    \label{eq:40}
    (n+\alpha)^{-1}\lVert \psi_{n+i} \rVert_{}^2 \lVert \psi_{n}\rVert_{\mathscr{X}_{n}}^2\leq \frac{1}{2}\Bigl(\varepsilon(n+\alpha)^{-2}\lVert \psi_{n}\rVert_{\mathscr{X}_{n}}^4+\frac{1}{\varepsilon}\lVert \psi_{n+i} \rVert_{}^4\Bigr)\; .
  \end{equation}
  Combining~\eqref{eq:40} with~\eqref{eq:39}, and applying to
  Equation~\eqref{eq:38}, we obtain the following bound: for all
  $\varepsilon,\alpha>0$, $\exists
  C(\alpha,\varepsilon)>0$ such that
  \begin{equation}
    \label{eq:41}
    (n+\alpha)^{-1}\lVert \psi_{n} \rVert_{\mathscr{X}_{n}}^2\leq C(\alpha,\varepsilon)\sum_{i=-2}^2\lVert \psi_{n+i} \rVert_{}^2+\varepsilon(n+\alpha)^{-1}\lVert \psi_{n} \rVert_{\mathscr{X}_{n}}^2\; .
  \end{equation}
  Fix $\varepsilon <1$, then for all $\alpha >0$, $\exists C(\alpha)>0$ such that $\forall \bar{n}\in
  \mathds{N}$:
  \begin{equation}
    \label{eq:4}
    \sum_{n=0}^{\bar{n}} (n+\alpha)^{-1}\lVert \psi_{n} \rVert_{\mathscr{X}_{n}}^2\leq C(\alpha) S\; ;
  \end{equation}
  uniformly in $\bar{n}$. Then we can take the limit
  $\bar{n}\to\infty$ and obtain $S_{\alpha}<\infty$.
  \begin{remark*}
    The bound of Equation~\eqref{eq:41} could seem to follow from an
    implicit smallness condition on the interaction $H_I$. As it will
    become clearer with the examples of
    Section~\ref{sec:applications}, it is not the case. Roughly
    speaking, Assumption~\ref{ass:2} allows for interaction parts that
    are at most as singular as
    $(H_0+\lvert M_1\rvert_{}^{}+\lvert
    M_2\rvert_{}^{})^{1/2}(N+1)^{1/2}$.
  \end{remark*}

  Now return to Equation~\eqref{eq:18}. There exists $n^{*}\in\mathds{N}$ such that $\forall n\geq n^{*}$:
  \begin{equation*}
    \frac{1}{2}S\leq \sum_{i=0}^{n}\lVert \psi_{i} \rVert_{}^2\leq S\; .
  \end{equation*}
  Hence summing in $n^{*} \leq n \leq \bar{n}$ on both sides
  of~\eqref{eq:18} we obtain for all $\bar{n}>n^{*}$:
  \begin{equation*}
    \begin{split}
      \frac{1}{2}S\sum_{n=n^{*}}^{\bar{n}}(n+1)^{-1}\leq \sum_{n=n^{*}}^{\bar{n}}(n+1)^{-1}\sum_{i=0}^n\lVert \psi_{i} \rVert_{}^2\leq 2 \frac{C}{\lvert \Im z\rvert_{}^{}}(3S+S_1+S_2)\; .
    \end{split}
  \end{equation*}
  The bound on the right hand side is uniform in $\bar{n}$: that is
  absurd, unless $S=S_1=S_2=0$
  $\Leftrightarrow$ $\psi =0$.
\end{proof}

Once essential self-adjointness is established, it is possible to give
the following characterization of the domain of self-adjointness
$D(H)$.

\begin{proposition}
  \label{prop:1}
  Assume \ref{ass:1} and \ref{ass:2}(\ref{ass:3}). If exists $K$
  self-adjoint operator with domain $D(K)$ such that:
  \begin{enumerate}[label=\textcolor{darkblue}{\roman*)},itemsep=10pt]
  \item $D(H_{0})\cap D(K)$ is dense in $\mathscr{H}$;
    $\mathscr{H}_1\otimes f_0(\mathscr{H}_2^{(\cdot)})$ is
    dense in $D(K)$.
  \item There exists $ 0 < \varepsilon < 1$ such that $\exists
    C(\varepsilon)>0$, $\forall \phi\in D(H_{0})\cap D(K)$:
    \begin{equation}
      \label{eq:20}
      \lVert H_I\phi \rVert_{}^{}\leq \varepsilon\lVert H_{0}\phi \rVert_{}^{}+C(\varepsilon)(\lVert K\phi \rVert_{}^{}+\lVert \phi \rVert_{}^{})\; .
    \end{equation}
  \end{enumerate}
  Then $D(H)\cap D(K)=D(H_{0})\cap D(K)$.
\end{proposition}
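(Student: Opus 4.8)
The plan is to prove the two inclusions of $D(H)\cap D(K)=D(H_0)\cap D(K)$ separately, using the number cutoffs $P_n:=1\otimes\mathds{1}_{\leq n}(\mathscr{H}_2^{(\cdot)})$ and the relative bound~\eqref{eq:20}. Write $H$ again for the self-adjoint operator of Theorem~\ref{thm:1} and $\mathscr{D}_0:=D(H_0)\cap\mathscr{H}_1\otimes f_0(\mathscr{H}_2^{(\cdot)})$ for its core. First I would record three preliminary facts. (a) By hypothesis~(i), $\mathscr{D}_0\subseteq D(H_0)\cap D(K)$, and $P_n$ maps $D(K)$ into $D(K)$ (since $P_n\phi\in\mathscr{H}_1\otimes f_0\subseteq D(K)$) with $P_n\to1$ strongly on $D(K)$ in the graph norm of $K$ — this uses that $\mathscr{H}_1\otimes f_0$ is a core for $K$. (b) By Assumption~\ref{ass:1}, $P_n$ commutes with $H_{01}\otimes1$ trivially and with $1\otimes H_{02}$ because $e^{itH_{02}}$ leaves each $\mathscr{H}_2^{(k)}$ invariant; hence $P_n$ commutes with $H_0$ and with every bounded Borel function of $H_0$, maps $D(H_0)$ into itself, and converges strongly to $1$ on $D(H_0)$ in the graph norm. (c) Rearranging~\eqref{eq:20}, for $\phi\in D(H_0)\cap D(K)$ one has $\lVert H_0\phi\rVert\leq(1-\varepsilon)^{-1}\bigl(\lVert(H_0+H_I)\phi\rVert+C(\varepsilon)(\lVert K\phi\rVert+\lVert\phi\rVert)\bigr)$, so on $D(H_0)\cap D(K)$ the three operators $H_0$, $H_I$ and $H_0+H_I$ are pairwise relatively bounded modulo $K$ and the identity.

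For the inclusion $D(H_0)\cap D(K)\subseteq D(H)\cap D(K)$, I would take $\phi\in D(H_0)\cap D(K)$ and $\phi_n:=P_n\phi$. By~(a) and~(b), $\phi_n\in\mathscr{D}_0$ with $\phi_n\to\phi$, $H_0\phi_n\to H_0\phi$ and $K\phi_n\to K\phi$; applying~\eqref{eq:20} to $\phi-\phi_n$ then gives $H_I\phi_n\to H_I\phi$, so $H\phi_n=(H_0+H_I)\phi_n$ converges while $\phi_n\to\phi$ stays in the core $\mathscr{D}_0$. Hence $\phi\in D(H)$ and $H\phi=H_0\phi+H_I\phi$. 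This also shows that the symmetric operator $B:=(H_0+H_I)$ with domain $D(H_0)\cap D(K)$ satisfies $B\subseteq H$; since $\mathscr{D}_0\subseteq D(B)$ is a core for $H$, it follows that $\overline{B}=H$.

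For the reverse inclusion I must show that $\phi\in D(H)\cap D(K)$ implies $\phi\in D(H_0)$. The strategy is: since $\overline{B}=H$, approximate $\phi$ by $\psi_k\in\mathscr{D}_0$ with $\psi_k\to\phi$ and $H\psi_k\to H\phi$, \emph{arranged so that in addition $K\psi_k\to K\phi$} — equivalently, show that $\mathscr{D}_0$ is dense in $D(H)\cap D(K)$ for the norm $\lVert\cdot\rVert+\lVert H\cdot\rVert+\lVert K\cdot\rVert$. Given such a sequence, fact~(c) applied to $\psi_k-\psi_\ell\in\mathscr{D}_0$ yields $(1-\varepsilon)\lVert H_0(\psi_k-\psi_\ell)\rVert\leq\lVert H(\psi_k-\psi_\ell)\rVert+C(\varepsilon)(\lVert K(\psi_k-\psi_\ell)\rVert+\lVert\psi_k-\psi_\ell\rVert)\to0$, so $(H_0\psi_k)_k$ is Cauchy; as $H_0$ is closed and $\psi_k\to\phi$, this forces $\phi\in D(H_0)$, and together with the first inclusion we conclude. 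The hard part will be precisely this simultaneous approximation. The two operators $H_0$ and $K$ are individually easy to mollify but need not commute, so one cannot simply take the product of their mollifiers; the way out is to exploit the Fock grading, which fact~(b) shows is respected by the cutoffs $P_n$ and by the $H_0$-mollifiers $(1+\delta\lvert H_0\rvert)^{-1}$, and which fact~(a) makes compatible with $K$. Staging these mollifications — cutting the particle number, then mollifying by $(1+\delta\lvert H_0\rvert)^{-1}$ sector by sector, then diagonalising — should produce the required $\psi_k$. I expect this construction, and the verification that $H\psi_k$ and $K\psi_k$ really converge to the right limits along it, to be the only genuine obstacle; it is immediate in the main applications of Section~\ref{sec:applications}, where $K$ is a function of $N$, so that $K$, $H_0$ and the $P_n$ all commute and one may simply take a diagonal subsequence of the family $P_n(1+\delta\lvert H_0\rvert)^{-1}\phi$.
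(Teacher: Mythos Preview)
The paper's proof is a two-line manipulation of~\eqref{eq:20} with no cutoffs or mollifiers: from $H=H_0+H_I$ it gets $\lVert H\phi\rVert\le(1+\varepsilon)\lVert H_0\phi\rVert+C(\varepsilon)(\lVert K\phi\rVert+\lVert\phi\rVert)$ on $D(H_0)\cap D(K)$ for the first inclusion, and for $\phi\in D(H)\cap D(K)$ it writes $\lVert H_0\phi\rVert\le\lVert H\phi\rVert+\lVert H_I\phi\rVert$, invokes~\eqref{eq:20} once more, and absorbs $\varepsilon\lVert H_0\phi\rVert$ on the left.

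You are right that this second step tacitly uses $\phi\in D(H_0)$ in order to apply~\eqref{eq:20}, so the paper is being formal there, and an approximation argument is the natural way to make it honest. But your proposal has its own gaps. Your fact~(a)---that $P_n\to1$ on $D(K)$ in the $K$-graph norm---does \emph{not} follow from $\mathscr{H}_1\otimes f_0(\mathscr{H}_2^{(\cdot)})$ being a core for $K$: a core guarantees \emph{some} approximating sequence in $\mathscr{H}_1\otimes f_0$, not that the particular sequence $(P_n\phi)_n$ converges in graph norm. For the latter you would need $K$ to commute with the $P_n$; this holds in the applications of Section~\ref{sec:applications}, where $K$ is a power of $N$, but it is not part of the hypotheses of the Proposition. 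Without it your first-inclusion argument already breaks (you use $K\phi_n\to K\phi$ to control $\lVert H_I(\phi-\phi_n)\rVert$ via~\eqref{eq:20}), and the ``simultaneous mollification'' in $H$- and $K$-graph norms that you need for the reverse inclusion---which you yourself flag as the real obstacle---is left as a sketch. So while you have correctly located the point where the paper is terse, your replacement is longer and no more complete.
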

\begin{proof}
  Using bound~\eqref{eq:20}, we have $\forall \phi \in D(H_{0})\cap
  D(K)$:
  \begin{equation}
    \label{eq:21}
  \lVert H\phi \rVert_{}^{}\leq (\varepsilon+1)\lVert H_{0}\phi \rVert_{}^{}+C(\varepsilon)(\lVert K\phi \rVert_{}^{}+\lVert \phi \rVert_{}^{})\; .
  \end{equation}
  Then $D(H)\supseteq D(H_{0})\cap D(K)$. Now let $\phi\in D(H)\cap
  D(K)$: using~\eqref{eq:20}
  \begin{equation}
    \label{eq:22}
    \lVert H_{0}\phi \rVert_{}^{}\leq \varepsilon\lVert H_0\phi \rVert_{}^{} +\lVert H\phi \rVert_{}^{}+C(\varepsilon)(\lVert K\phi \rVert_{}^{}+\lVert \phi \rVert_{}^{})\; ;
  \end{equation}
  since $\varepsilon <1$, $D(H_0)\supseteq D(H)\cap D(K)$.
\end{proof}

\section{Applications}
\label{sec:applications}

It is possible to apply Theorem~\ref{thm:1} in several situations of
mathematical and physical interest. We present and discuss some of
them in this section; not before a brief discussion of the
``boundaries'' of Theorem~\ref{thm:1}: it may be interesting to see
how its proof fails when we consider operators that are more than
quadratic in the annihilation/creation operators; and to define a
quadratic operator that is not sufficiently regular for
Assumption~\ref{ass:2}(\ref{ass:3}) to hold. According to this
purpose, we will consider simple toy models on
$\Gamma _s(\mathds{C})$. We denote by $a^{\#}$ the corresponding
annihilation/creation operators.

Let's consider a simple trilinear Hamiltonian on $\Gamma _s(\mathds{C})$:
\begin{equation*}
  H_3=a^{*}a+a^{*}a^{*}a^{*}+aaa\; .
\end{equation*}
The free part is $H_0=a^{*}a$, and the interaction part is
$H_I=a^{*}a^{*}a^{*}+aaa$. Assumption~\ref{ass:1} is satisfied, and
Assumption~\ref{ass:3} is slightly modified: $i$ now ranges from $-3$
to $3$, and bounds~\eqref{eq:16} and~\eqref{eq:19} are replaced by the
simple bound:
\begin{equation*}
  \lvert \langle  \psi  , H_I\phi  \rangle  \rvert_{\mathscr{H}}\leq C(n+1)^{3/2}\lVert \phi   \rVert_{\mathscr{H}^{(n)}}^{}\bigl(\lVert \psi _{n+3}  \rVert_{\mathscr{H}^{(n+3)}}^{}+\lVert \psi _{n-3}  \rVert_{\mathscr{H}^{(n-3)}}^{}\bigr)\; .
\end{equation*}
The proof of Theorem~\ref{thm:1} carries on, almost unchanged, up to
Equation~\eqref{eq:18} that would now read
\begin{equation*}
  \lvert \Im z  \rvert_{}^{}\sum_{i=0}^n \lVert \psi_i   \rVert_{}^2\leq C (n+1)^{3/2}\lVert \psi _{n+3}  \rVert_{}^2\; .
\end{equation*}
However if we now take the sum in $n$ from $n^{*}$ to $\bar{n}$ (where
$n^{*}$ is such that
$\frac{1}{2}\lVert \psi \rVert_{}^2\leq \sum_{i=0}^n \lVert \psi_i
\rVert_{}^2\leq \lVert \psi \rVert_{}^2$
for all $n\geq n^{*}$) we \emph{cannot} conclude that
$\lVert \psi \rVert_{}^{}$ must be zero, because the series
$\sum_{n=0}^{\infty }(n+1)^{-3/2}$ converges. Hence the proof fails,
and analogously would fail for any higher order polynomial of the
annihilation/creation operators.

On the other hand, we introduce now a quadratic model for which
Assumption~\ref{ass:2}(\ref{ass:3}) fails to hold, and thus
Theorem~\ref{thm:1} cannot be applied. For the following operator on
$L^2 (\mathds{R}^{} )\otimes \Gamma _s(\mathds{C})$
Assumption~\ref{ass:2} \emph{is satisfied}:
\begin{equation*}
  H_{\partial  a}=-\partial  _x^2+a^{*}a-i\partial  _x(a^{*}+a)+a^{*}a^{*}+aa\; ,
\end{equation*}
where $H_0=-\partial  _x^2+a^{*}a$ and
$H_I=-i\partial  _x(a^{*}+a)+a^{*}a^{*}+aa$. If, however, the derivative
operator is coupled with the quadratic term
\begin{equation*}
  H_{\partial  aa}=-\partial  _x^2+a^{*}a-i\partial  _x(a^{*}a^{*}+aa)\; ,
\end{equation*}
\ref{ass:2}(\ref{ass:3}) is no longer satisfied. The interaction in
this case would be of type $H_0^{1/2}N$, and therefore too singular:
Theorem~\ref{thm:1} \emph{does not hold} for $H_{\partial aa}$.

Throughout the section we will adopt the following notations, in
addition to the ones of Section~\ref{sec:defin-notat}. Let
$\mathscr{K}$ a Hilbert space; we denote by $\mathcal{L}(\mathscr{K})$
the set of bounded operators on $\mathscr{K}$ and by
$\lvert \,\cdot\,\rvert_{\mathcal{L}(\mathscr{K})}^{}$ the operator
norm. It is also useful to define the annihilation/creation operator
valued distributions $a^{\#}(x)$, $x\in\mathds{R}^d$. Let
$f\in L^2 (\mathds{R}^d)$, $a^{\#}(f)$ the annihilation/creation
operators on $\Gamma_s(L^2 (\mathds{R}^d))$. Then the operator valued
distributions $a^{\#}(x)$ acting on $L^2 (\mathds{R}^d)$, with values
on $\Gamma_s(L^2 (\mathds{R}^d))$, are defined by:
\begin{equation*}
  (a^{*},f)\equiv\int_{\mathds{R}^d}a^{*}(x)f(x)dx:=a^{*}(f)\; ;\; (a,f)\equiv\int_{\mathds{R}^d}a(x)\bar{f}(x)dx:=a(f)\; .
\end{equation*}
They satisfy the commutation relations (inherited by the CCR)
$[a(x),a^{*}(y)]=\delta(x-y)$.

\subsection{Hamiltonians of identical bosons.}
\label{sec:hamilt-many-bosons}

The criterion applies to operators in the Fock space
$\Gamma_s(\mathscr{K})$, for any separable Hilbert space
$\mathscr{K}$. Simply choose $\mathscr{H}_1\equiv \mathds{C}$ and
$\mathscr{H}_2\equiv \mathscr{K}$; then $\mathds{C}\otimes
\Gamma_s(\mathscr{K})\approx \Gamma_s(\mathscr{K})$ up to an unitary
isomorphism.

An example is given by the following class of operators. Let
$\mathscr{K}=L^2 (\mathds{R}^d)$; $h_0$ a positive self adjoint
operator on $L^2 (\mathds{R}^{d})$ (the one-particle free
Hamiltonian). Furthermore, let $V_1\in L^2 (\mathds{R}^{d})$, $V_2,
V_3\in L^2 (\mathds{R}^{2d})$, with $V_2=\overline{V}_2$, and
$V_4(\cdot):\mathds{R}^d\to \mathds{R}$, such that $V_4(x)=V_4(-x)$
and $V_4\,(h_0+1)^{-1/2}\in \mathcal{L}(L^2 (\mathds{R}^d))$. Consider
\begin{equation}
  \label{eq:32}
  \begin{split}
    H=d\Gamma(h_0)+\int_{\mathds{R}^d}^{}\Bigl(V_1(x)a^{*}(x)+\overline{V}_1(x)a(x)\Bigr)  dx+ \int_{\mathds{R}^{2d}}^{}\Bigl(V_2(x,y)a^{*}(x)a(y)+V_3(x,y)a^{*}(x)\\
    a^{*}(y) +\overline{V}_3(x,y)a(x)a(y)\Bigr)  dxdy+\frac{1}{2}\int_{\mathds{R}^{2d}}^{}V_4(x-y)a^{*}(x)a^{*}(y)a(x)a(y)  dxdy\; .
  \end{split}
\end{equation}
We make the following identifications:
$H_{01}\equiv 0$, $H_{02}\equiv d\Gamma(h_0)$, $H_{diag}\equiv\int
(V_4a^{*}a^{*}aa+V_2a^{*}a)$, $H_2\equiv
\int_{}^{}(V_1a^{*}+\overline{V}_1a)+\int
(V_3a^{*}a^{*}+\overline{V}_3aa)$. Assumption~\ref{ass:1} is trivial
to verify; and Assumption~\ref{ass:3} follows from standard estimates
on Fock space: let $\psi\in\Gamma_s(L^2 (\mathds{R}^d))$, $\phi_n\in
L^2_s(\mathds{R}^{nd})\cap Q(d\Gamma(h_0))$, $n\in\mathds{N}$, then
\begin{gather}
  \label{eq:35}
  \begin{split}
    \lvert \langle  \psi, H_{diag}\phi_n \rangle_{}\rvert_{}^{}\leq \Bigl(n\lVert V_2 \rVert_2^{}\lVert \phi_n \rVert_{}^{}+\lvert V_4\,(h_{0}+1)^{-1/2}\rvert_{\mathcal{L}(L^2 (\mathds{R}^d))}^{}\bigl(n^{3/2}\lVert (d\Gamma(h_0))^{1/2}\phi_n \rVert_{}^{}\\+n^2\lVert \phi_n \rVert_{}^{}\bigr)\Bigr)\lVert \psi_n \rVert_{}^{}\; ;
  \end{split}\\
  \label{eq:36}  \lvert \langle  \psi, H_2\phi_n \rangle_{}\rvert_{}^{}\leq 2\Bigl(\sqrt{n+1}\lVert V_1 \rVert_2^{}+ (n+1)\lVert V_3 \rVert_2^{}\Bigr)\lVert \phi_n \rVert_{}^{}\sum_{\substack{i=-2\\i\neq 0}}^{2}\lVert \psi_{n+i} \rVert_{}^{}\; .
\end{gather}
Hence we can apply Theorem~\ref{thm:1}; and prove essential
self-adjointness of $H$ in $D(d\Gamma (h_0))\cap
f_0(L^2(\mathds{R}^{d})^{(\cdot)})$. We can also apply
Proposition~\ref{prop:1} with $K\equiv N^{3}$, i.e. $D(H)\cap
D(N^{3})=D(d\Gamma(h_0))\cap D(N^{3})$. Observe that if $d=3$,
the well-known many body Hamiltonian with Coulomb pair interaction
\begin{equation*}
  H_C=d\Gamma(-\Delta)\pm\frac{1}{2}\int_{\mathds{R}^6}^{}\frac{1}{\lvert x-y\rvert_{}^{}}a^{*}(x)a^{*}(y)a(x)a(y)  dxdy\; ,
\end{equation*}
is just the special case $h_0= -\Delta$, $V_1=V_2=V_3=0$ and
$V_4=\pm \lvert x \rvert_{}^{-1}$.

\subsection{Nelson-type Hamiltonians.}
\label{sec:nels-type-hamilt}

We consider now the dynamics of different species of particles (or
fields) interacting. A typical example is the Nelson Hamiltonian. It
was introduced in a rigorous way by \citet{nelson:1190} to describe
nucleons in a meson field, and studied by several authors
\citep[e.g.][]{DG1,MR1814991,MR1809881,GHPS}.

Let $\mathscr{H}=L^2 (\mathds{R}^{pd})\otimes \Gamma_s(L^2
(\mathds{R}^d))$: the first space corresponds to $n$ non-relativistic
particles; the second to a scalar relativistic field. Let $\omega$ be
a positive self-adjoint operator on $L^2 (\mathds{R}^d)$ (the
dispersion relation of the relativistic field), $V\in L^2_{loc}
(\mathds{R}^d, \mathds{R}_{+})$ an external potential acting on the
particles. The interaction between the particles and the field is
linear in the creation and annihilation operators $a^{\#}$
corresponding to the field. Let $v:\mathds{R}^{2d}\to \mathds{C}$ such
that
\begin{itemize}[label=\textcolor{darkblue}{\textbullet},itemsep=10pt]
\item $(1-\Delta_x)^{-1/2}\lVert v(x,\cdot) \rVert_{L^2_{(k)}
    (\mathds{R}^d)}^2(1-\Delta_x)^{-1/2}\in \mathcal{L}(L^2_{(x)}
  (\mathds{R}^{d}))$;
\item for all $k\in\mathds{R}^d$, $v(x,k)(1-\Delta_x)^{-1/2}\in
  \mathcal{L}(L^2_{(x)} (\mathds{R}^d))$, with $\lvert
  v(x,\cdot)(1-\Delta_x)^{-1/2}\rvert_{\mathcal{L}(L^2_{(x)}
    (\mathds{R}^{d}))}^{}\in L^2_{(k)} (\mathds{R}^d)$.
\end{itemize}
Then we define the Nelson Hamiltonian:
\begin{equation}
  \label{eq:23}
  H_N=\Bigl(\sum_{i=1}^p -\Delta_{x_i}+V(x_i)\Bigr)\otimes 1+1\otimes d\Gamma(\omega)+\sum_{i=1}^pa^{*}(v(x_{i},\cdot))+a(v(x_i,\cdot))\; .
\end{equation}
The function $v$ describes the coupling between the particles and the
relativistic field. The assumptions above imply that it has a good
behaviour both for high and small momenta; in particular in
three-dimensions it acts as an UV cutoff function.
\begin{remark*}
  The model of \citet{nelson:1190} was much more specific: $d=3$,
  $\omega(k)=\sqrt{k^2+\mu^2}$ with $\mu>0$, $V=0$ and
  $v(x,k)=\lambda(2\pi)^{-3/2}(2\omega(k))^{-1/2}e^{-ik\cdot
    x}\mathds{1}_{\lvert \,\cdot\,\rvert \leq \sigma}(k)$ with
  $\lambda,\sigma >0$. With these assumptions, $v\in L^{\infty}
  (\mathds{R}^3,L^2 (\mathds{R}^3))$, $\omega^{-1/2}v\in L^{\infty}
  (\mathds{R}^3,L^2 (\mathds{R}^3))$; then $H_N$ (the Nelson model
  with UV cut off) is self-adjoint by the Kato-Rellich
  Theorem. However, if we consider $d=2$ and $\mu=0$ (massless
  relativistic field), the Kato-Rellich Theorem is not applicable
  because $\omega^{-1/2}v\notin L^{\infty} (\mathds{R}^2,L^2
  (\mathds{R}^2))$ due to an infrared divergence. Instead assumptions
  \ref{ass:1} and \ref{ass:3} are still satisfied, thus
  Theorem~\ref{thm:1} can be used.
\end{remark*}

In order to check Assumptions~\ref{ass:1} and~\ref{ass:2}
on~\eqref{eq:23}, we make the (straightforward) identifications:
$\mathscr{H}_1\equiv L^2 (\mathds{R}^{pd})$, $\mathscr{H}_2\equiv L^2
(\mathds{R}^d)$, $H_{01}\equiv\sum_i-\Delta_{x_i}+V(x_i)$,
$H_{02}\equiv d\Gamma(\omega)$, $H_I\equiv
\sum_ia^{*}(v(x_i,\cdot))+a(v(x_i,\cdot))$. We do not need to
introduce a decomposition of $\mathscr{H}_1$. Assumption~\ref{ass:1}
is satisfied: for all $V\in L^2_{loc} (\mathds{R}^d,\mathds{R}_{+})$,
$-\Delta+V(\cdot)$ is a positive self-adjoint operator, and the
vectors with fixed number of particles are invariant for the evolution
associated with the positive self-adjoint operator
$d\Gamma(\omega)$. Furthermore, since $H_{01}\otimes 1$ and $1\otimes
H_{02}$ are positive self-adjoint commuting operators, $H_0$ is a
positive self-adjoint operator with domain $D(H_0)= D(H_{01}\otimes
1)\cap D(1\otimes H_{02})$. Assumption \ref{ass:2} is also satisfied
by usual estimates: $\forall \psi \in \mathscr{H}$, $\forall\phi_n\in
L^2 (\mathds{R}^{pd})\otimes L^2_s(\mathds{R}^{nd})\cap
Q(H_{01}\otimes 1)$, $n\in\mathds{N}$,
\begin{equation}
  \label{eq:13}
  \begin{split}
    \lvert \langle \psi , H_I\phi_n \rangle_{}\rvert\leq \sqrt{2p}\bigl( 2 \sqrt{n} \lVert \lvert v(x,\cdot)(1-\Delta_x)^{-1/2}\rvert_{\mathcal{L}(L^2_{(x)})}^{} \rVert_{L^2_{(k)}} + \lvert (1-\Delta_x)^{-1/2}\lVert v(x,\cdot) \rVert_{L^2_{(k)}}^2\\(1-\Delta_x)^{-1/2}\rvert_{\mathcal{L}(L^2_{(x)})}^{1/2}\bigr)\Bigl(\Bigl\lVert \bigl(\sum_{i=1}^p-\Delta_{x_i}\bigr)^{1/2}\phi_n \Bigr\rVert+\sqrt{p} \lVert \phi_n \rVert\Bigr) \sum_{\substack{i=-1\\i\neq 0}}^{1}\lVert \psi_{n+i} \rVert\; .
  \end{split}
\end{equation}
Then $H_N$ is essentially self-adjoint on $D(H_0)\cap f_0(L^2
(\mathds{R}^{pd})\otimes L^2(\mathds{R}^{d})^{(\cdot)})$.

Let $H_N\rvert_s$ be the restriction of $H_N$ to $L^2_s
(\mathds{R}^{pd})\otimes \Gamma_s(L^2 (\mathds{R}^d))$. It is possible
to extend $H_N\rvert_s$ to $\Gamma_s(L^2 (\mathds{R}^d))\otimes
\Gamma_s(L^2 (\mathds{R}^d))$ in the following way. Define
\begin{equation}
  \label{eq:25}
  \widetilde{H}_N=d\Gamma(-\Delta+V)\otimes 1+1\otimes d\Gamma(\omega)+\int_{\mathds{R}^d}^{}  \psi^{*}(x)\bigl(a^{*}(v(x,\cdot))+a(v(x,\cdot))\bigr)\psi(x)dx\; ,
\end{equation}
where $\psi^{\#}$ are the creation and annihilation operators
corresponding to the first Fock space. Then $H_N\rvert_s$ and
$\widetilde{H}_N$ agree on the $p$-particle sector $L^2_s
(\mathds{R}^{pd})\otimes \Gamma_s(L^2 (\mathds{R}^d))$ of
$\Gamma_s(L^2 (\mathds{R}^d))\otimes \Gamma_s(L^2
(\mathds{R}^d))$. The self-adjointness of $\widetilde{H}_N$ still
follows from Theorem~\ref{thm:1} using the bound~\eqref{eq:13}: it is
sufficient to choose for $\mathscr{H}_1\equiv \Gamma_s(L^2
(\mathds{R}^d))$ the decomposition in finite particle vectors
$\{\mathscr{H}_{1}^{(j)}\otimes \Gamma_s(\mathscr{H}_2)
\}_{j\in\mathds{N}}\equiv \{L^2_s (\mathds{R}^{jd})\otimes
\Gamma_s(\mathscr{H}_2) \}_{j\in\mathds{N}}$. Let
$H_0\equiv d\Gamma(-\Delta+V)\otimes 1 +1\otimes d\Gamma(\omega)$, then the
domain of essential self-adjointness for $\widetilde{H}_N$ is
$D(H_0)\cap f_0(L^2(\mathds{R}^{d})^{(\cdot)}\otimes
L^2(\mathds{R}^{d})^{(\cdot)})$. Let $N_1$ and $N_2$ be the number
operators corresponding to the first and second Fock space
respectively. Then applying Proposition~\ref{prop:1} we also obtain
$D(\widetilde{H}_N)\cap D(N_1^2+N_2^2)=D(H_0)\cap D(N_1^2+N_2^2)$.

\subsection{Pauli-Fierz Hamiltonian.}
\label{sec:pauli-fierz-hamilt}

The last example considered is an operator describing the dynamics of
rigid charges and their radiation field interacting. The model was
introduced by \citet{pauli1938theorie}, and has been extensively
studied by a mathematical standpoint. See \citet[][and references
thereof contained]{MR2097788} for a detailed presentation.

Let $\mathscr{H}^{(spin)}=(\otimes^p
\mathds{C}^{2[\frac{d}{2}]})\otimes L^2 (\mathds{R}^{pd})\otimes
\Gamma_s(\mathds{C}^{d-1}\otimes L^2 (\mathds{R}^d))$, $\mathscr{H}=
L^2 (\mathds{R}^{pd})\otimes \Gamma_s(\mathds{C}^{d-1}\otimes L^2
(\mathds{R}^d))$: the first space corresponds to $p$
spin-$\frac{1}{2}$ particles, the second to spinless particles. Let
$\chi\in L^2 (\mathds{R}^d)$, $V\in L^2_{loc}
(\mathds{R}^{pd},\mathds{R}_{+})$, $\omega=\lvert k\rvert $, $m_j>0$,
$q_j\in \mathds{R}$ for all $j=1,\dotsc,p$. Furthermore, let
$e_{\lambda}: \mathds{R}^d\to \mathds{R}^d$ such that for almost all
$k\in \mathds{R}^d$, $k\cdot e_{\lambda}(k)=0$ and $e_{\lambda}(k)
\cdot e_{\lambda'}(k)=\delta_{\lambda\lambda'}$ for all
$\lambda,\lambda'=1,\dotsc, d-1$. Then we define the electromagnetic
vector potential in the Coulomb gauge as
\begin{equation}
  \label{eq:26}
  A(x)=\sum_{\lambda=1}^{d-1}\int_{\mathds{R}^d}^{}e_{\lambda}(k)\Bigl( a^{*}_{\lambda}(k)\chi(k)e^{ik\cdot x}+a_{\lambda}(k)\bar{\chi}(k)e^{ik\cdot x} \Bigr)  dk\; ;
\end{equation}
where $a_{\lambda}^{\#}$ are the creation and annihilation operators
of $\Gamma_s(\mathds{C}^{d-1}\otimes L^2 (\mathds{R}^d))$ satisfying
the canonical commutation relations
$[a_{\lambda}(k),a^{*}_{\lambda'}(k')]=\delta_{\lambda\lambda'}\delta(k-k')$;
the (spinless) Pauli-Fierz Hamiltonian on $\mathscr{H}$ is then
\begin{equation}
  \label{eq:27}
  H_{PF}= \sum_{j=1}^p \frac{1}{2m_j}\bigl(-i\nabla_j\otimes 1+q_j A(x_j)\bigr)^2 + V(x_1,\dotsc, x_p)\otimes 1 + 1\otimes \sum_{\lambda=1}^{d-1}\int_{\mathds{R}^d}^{}\omega(k)a^{*}_{\lambda}(k)a_{\lambda}(k)  dk\; .
\end{equation}
The function $\chi$ plays the role of an ultraviolet cut off in the
interaction, and is usually interpreted as the
Fourier transform of the particles' charge distribution. Let
$\{\sigma^{(\mu)}\}_{\mu=1}^d$ the $2^{[\frac{d}{2}]}\times
2^{[\frac{d}{2}]}$ matrices satisfying
$\sigma^{(\mu)}\sigma^{(\nu)}+\sigma^{(\nu)}\sigma^{(\mu)}=2\delta_{\mu\nu}\mathrm{Id}$. Also,
denote by $\sigma_j^{(\mu)}$, $j=1,\dotsc, p$ the operator on
$(\otimes^p \mathds{C}^{2[\frac{d}{2}]})$ acting as $\sigma^{(\mu)}$
on the $j$-th space of the tensor product. Then the spin-$\frac{1}{2}$
Pauli-Fierz Hamiltonian on $\mathscr{H}^{(spin)}=(\otimes^p
\mathds{C}^{2[\frac{d}{2}]})\otimes \mathscr{H}$ can be written as:
\begin{equation}
  \label{eq:28}
  H_{PF}^{(spin)}=1\otimes H_{PF} +\frac{i}{2}\sum_{j=1}^pq_j\sum_{1\leq \mu<\nu\leq d}^{}\sigma_j^{(\mu)}\sigma_j^{(\nu)}\otimes \Bigl(\partial_j^{(\mu)}A^{(\nu)}(x_j)-\partial_j^{(\nu)}A^{(\mu)}(x_j)\Bigr)\; ;
\end{equation}
where $A^{(\mu)}(x)$ is the $\mu$-th component of the vector $A(x)$.

The quadratic form corresponding to the Pauli-Fierz Hamiltonian is
bounded from below, so it is possible to define at least one
self-adjoint extension by means of the Friedrichs Extension
Theorem. This type of information is not completely satisfactory,
since infinitely many extensions may exist, each one dictating a
different dynamics for the system. For small values of the ratios
$q^2_j/m_j$ between charge and mass of the particles, and if
$\chi,\chi/\sqrt{\omega}\in L^2 (\mathds{R}^d)$, a unique self-adjoint
extension is given by KLMN Theorem. For arbitrary values of the ratios
$q^2_j/m_j$, it is possible to prove essential self-adjointness of
both $H_{PF}$ and $H_{PF}^{(spin)}$ (for the spin operator we need in
addition $\omega\chi\in L^2 (\mathds{R}^d)$) by means of
Theorem~\ref{thm:1}, under the sole assumption $\chi\in L^2
(\mathds{R}^d)$. As discussed in Section~\ref{sec:introduction}, an
analogous result (on a slightly different domain) has been obtained
with an argument of functional integration by \citet{MR1891842}. If
the dependence on $x$ of $A(x)$ is more general, functional
integration methods may not be applicable; however Theorem~\ref{thm:1}
still holds.

In the following discussion we will focus on a simplified model, for
the sake of clarity. Assumptions~\ref{ass:1} and~\ref{ass:2} are
checked on $H_{PF}$ with $p=1$, $m=1/2$ and $q=-1$, i.e.:
$\mathscr{H}\equiv L^2 (\mathds{R}^d)\otimes
\Gamma_s(\mathds{C}^{d-1}\otimes L^2 (\mathds{R}^d))$ and
\begin{equation}
  \label{eq:29}
  H\equiv \bigl(i\nabla_x\otimes 1+A(x)\bigr)^2 + V(x)\otimes 1 + 1\otimes \sum_{\lambda=1}^{d-1}\int_{\mathds{R}^d}^{}\omega(k)a^{*}_{\lambda}(k)a_{\lambda}(k)  dk\; .
\end{equation}

Observe that, since we are in the Coulomb gauge, $\nabla_x\cdot
A(x)=0$ hence $[-i\nabla_x\otimes 1, A(x)]=0$ on a suitable dense
domain. Rewrite $H$ in the following form, to identify the free and
interaction parts:
\begin{equation}
  \label{eq:30}
  \begin{split}
    H= \bigl(-\Delta_x+V(x)\bigr)\otimes 1+1\otimes \sum_{\lambda=1}^{d-1}\int_{\mathds{R}^d}^{}\omega(k)a^{*}_{\lambda}(k)a_{\lambda}(k)  dk +2i A(x)\cdot(\nabla_x\otimes 1)+ A^2(x)\; .
  \end{split}
\end{equation}
We identify $H_{01}\equiv -\Delta+V$, $H_{02}\equiv
\sum_{\lambda}\int_{\mathds{R}^d}^{}\omega a^{*}_{\lambda}a_{\lambda}$
and $H_I\equiv 2i A\cdot(\nabla\otimes 1)+
A^2$. Assumption~\ref{ass:1} is satisfied, as in the Nelson
model~\eqref{eq:23} above. For the interaction part, we have the
following bounds: $\forall\psi\in\mathscr{H}$, $\forall\phi_n\in L^2
(\mathds{R}^d)\otimes (\mathds{C}^{d-1}\otimes L^2
(\mathds{R}^d))^{\otimes_s n}\cap Q(H_{01}\otimes 1)$,
$n\in\mathds{N}$,
\begin{equation}
  \label{eq:33}
  \begin{aligned}
    \lvert \langle \psi , &A(x)\cdot (\nabla_x\otimes 1)\phi_n\rangle_{}\rvert\leq \sqrt{2(d-1)}\lVert \chi \rVert_2^{} \sqrt{n+1}\lVert (\lvert \nabla_x\rvert_{}^{}\otimes 1) \phi_n\rVert_{}^{}\sum_{\substack{i=-1\\i\neq 0}}^1\lVert \psi_{n+i} \rVert_{}^{}\; ;\\
    \lvert \langle \psi , &A^2(x)\phi_n\rangle_{}\rvert\leq 2(d-1) \lVert \chi \rVert_2^{}(n+1)\lVert \phi_n \rVert_{}^{}\sum_{i=-2}^{2}\lVert \psi_{n+i} \rVert_{}^{}\; .
  \end{aligned}  
\end{equation}
Hence Assumption~\ref{ass:2} is satisfied. Then $H$ is essentially
self-adjoint on $D(H_0)\cap f_0(L^2 (\mathds{R}^{d})\otimes
(\mathds{C}^{d-1}\otimes L^2 (\mathds{R}^d))^{(\cdot)})$.
\begin{remark*}
  \label{rem:1}
  Neither non-negativity of the Pauli-Fierz operator nor smallness of
  the coupling constant are necessary to prove essential
  self-adjointness by means of Theorem~\ref{thm:1}. Using operator
  methods (commutator estimates), self-adjointness of $H_{PF}$ with
  $V=0$ has been proved for general coupling constants
  in~\citep{MR2436496}, but the non-negativity was needed to
  associate a unique self-adjoint operator to the quadratic
  form. Theorem~\ref{thm:1} relies on different assumptions, and takes
  advantage of the fibered structure of the Fock space: boundedness
  from below of the operator is, in general, not necessary. In fact,
  the Hamiltonians considered in Sections~\ref{sec:hamilt-many-bosons}
  and~\ref{sec:nels-type-hamilt} are possibly unbounded from below, as
  well as the following extension~\eqref{eq:31} of the Pauli-Fierz
  Hamiltonian to infinite degrees of freedom (for the particles). As
  outlined in Section~\ref{sec:conclusive-remarks-1}, if we assume
  boundedness from below, Theorem~\ref{thm:1} can be extended to
  operators quartic in the creation/annihilation operators (see
  Assumptions~\ref{ass:5},~\ref{ass:6} and Theorem~\ref{thm:2}).
\end{remark*}

Let $m_j=1/2$, $q_j=-1$ and
$V=\sum_{i=1}^pV_{ext}(x_i)+\sum_{i<j}V_{pair}(x_i-x_j)$ such that
$V_{ext}\in L^2_{loc}(\mathds{R}^d,\mathds{R}_{+})$,
$V_{pair}(x)=V_{pair}(-x)$ and
$V_{pair}(1-\Delta)^{-1/2}\in\mathcal{L}(L^2 (\mathds{R}^d))$. Under
these assumptions define $H_{PF}\rvert_s$ as the restriction
of~\eqref{eq:27} to $L^2_s (\mathds{R}^{pd})\otimes
\Gamma_s(\mathds{C}^{d-1}\otimes L^2 (\mathds{R}^d))$. The physical
interpretation is a system of $p$ identical bosonic charges subjected
to an external potential, interacting via pair interaction and with
their radiation field. As we did for the Nelson model
in~\eqref{eq:25}, we can extend $H_{PF}\rvert_s$ to $\Gamma_s(L^2
(\mathds{R}^d))\otimes \Gamma_s(\mathds{C}^{d-1}\otimes L^2
(\mathds{R}^d))$:
\begin{equation}
  \label{eq:31}
  \begin{split}
    \widetilde{H}_{PF}=\int_{\mathds{R}^d}^{}\psi^{*}(x)\Bigl\{\bigl(i\nabla_x\otimes 1 + A(x)\bigr)^2+V_{ext}(x) \Bigr\}\psi(x)  dx + \frac{1}{2}\int_{\mathds{R}^{2d}}^{}V_{pair}(x-y)\psi^{*}(x)\psi^{*}(y)\\\psi(x)\psi(y)  dxdy +1\otimes \sum_{\lambda=1}^{d-1}\int_{\mathds{R}^d}^{}\omega(k)a^{*}_{\lambda}(k)a_{\lambda}(k)  dk\; .
  \end{split}
\end{equation}
We would like to prove essential self-adjointness by means of
Theorem~\ref{thm:1}. Identify $H_{01}\equiv \int_{}^{}\psi^{*}(-\Delta
+V_{ext})\psi $; $H_{02}\equiv
\sum_{\lambda}\int_{\mathds{R}^d}^{}\omega
a^{*}_{\lambda}a_{\lambda}$; $H_I\equiv \int \psi^{*}(2i
A\cdot(\nabla\otimes 1)+
A^2)\psi+\frac{1}{2}\int_{}^{}V_{pair}\psi^{*}\psi^{*}\psi\psi $; and
$\{\mathscr{H}_{1}^{(j)}\otimes \Gamma_s(\mathscr{H}_2)
\}_{j\in\mathds{N}}\equiv \{L^2_s (\mathds{R}^{jd})\otimes
\Gamma_s(\mathds{C}^{d-1}\otimes L^2 (\mathds{R}^d))
\}_{j\in\mathds{N}}$. Then Assumptions~\ref{ass:1} and~\ref{ass:2} are
satisfied using bounds analogous to~\eqref{eq:33} and~\eqref{eq:35}
(for $V_{pair}$), for each fixed $j\in \mathds{N}$. Hence
$\widetilde{H}_{PF}$ is essentially self-adjoint on $D(H_{01}\otimes
1)\cap D(1\otimes H_{02})\cap f_0(L^2
(\mathds{R}^{d})^{(\cdot)}\otimes (\mathds{C}^{d-1}\otimes L^2
(\mathds{R}^d))^{(\cdot)})$.






\section{Conclusive remarks}
\label{sec:conclusive-remarks-1}

The examples of the preceding section are not exhaustive: we focused
on them because of their relevance in physical and mathematical
literature. The application to operators on curved space-time, or to
anti-symmetric systems may also lead to results of interest.

The Assumptions~\ref{ass:1}, \ref{ass:2} and \ref{ass:3} are easy to
check: in the examples above follow from basic estimates of creation
and annihilation operators. The proof of Theorem~\ref{thm:1} itself is
not complicated, and relies on the direct sum decomposition of
$\Gamma_s(\mathscr{H}_2)$ and the structure of the interaction with
respect to the latter. Hence this criterion gives, in our opinion, a
simple yet powerful tool to prove essential self-adjointness in Fock
spaces, tailored to take maximum advantage of their structure.

If we assume that $H$ is bounded from below, we can take inspiration
from \citet{masson1971} and extend our criterion to accommodate quartic
operators. The modified assumptions and theorem would then read:

\begin{assumption}{B$_{H}$}
  \label{ass:5}
  $H$ is a densely defined symmetric operator on
  $\mathscr{H}=\mathscr{H}_1\otimes \Gamma_s(\mathscr{H}_2)$ bounded
  from below. $H_{01}$ and $H_{02}$ are self-adjoint operators bounded
  from below such that $\forall t\in \mathds{R}$,
  $\{\mathscr{H}_2^{(n)}\}_{n\in\mathds{N}}$ is invariant for $e^{it
    H_{02}}$.
\end{assumption}

\begin{assumption}{B$_I$}
  \label{ass:6}
  $H_I$ is a symmetric operator on $\mathscr{H}$, with a domain of
  definition $D(H_I)$ such that $D(H_0)\cap D(H_I)$ is dense in
  $\mathscr{H}$. Furthermore exists a complete collection
  $\{\mathscr{H}_{1}^{(j)}\otimes \Gamma_s(\mathscr{H}_2)
  \}_{j\in\mathds{N}}$ invariant for $H_0$ and $H_I$ such that:
  $\forall\phi\in Q(H_{01}\otimes 1)\cap Q(1\otimes H_{02})\cap
  \mathscr{H}_1^{(j)}\otimes \mathscr{H}_2^{(n)}$,
  \begin{equation}
    \label{eq:34}
    H_I\,\phi\in \bigoplus_{i=-4}^{4}\mathscr{H}_1^{(j)}\otimes \mathscr{H}_2^{(n+i)}\; .
  \end{equation}
  Also, $H_I$ satisfies the following bound: $\forall
  j,n\in\mathds{N}$ $\exists C(j)>0$ such that $\forall\psi\in
  \mathscr{H}$, $\forall\phi\in Q(H_{01}\otimes 1)\cap Q(1\otimes
  H_{02})\cap\mathscr{H}_1^{(j)}\otimes \mathscr{H}_2^{(n)}$:
  \begin{equation}
    \label{eq:37}
    \begin{split}
      \lvert \langle \psi , H_I\phi \rangle_{\mathscr{H}}\rvert_{}^2\leq C^2(j)\sum_{i=-4}^4\lVert \psi_{j,n+i} \rVert_{\mathscr{H}_1^{(j)}\otimes\mathscr{H}_2^{(n+i)}}^2 \Bigl[(n+1)^4\lVert \phi \rVert_{\mathscr{H}_1^{(j)}\otimes\mathscr{H}_2^{(n)}}^{2}+(n+1)^{2}\Bigl(q_{H_{01}\otimes 1}(\phi,\phi)\\+q_{1\otimes H_{02}}(\phi,\phi)+(\lvert M_1\rvert_{}^{}+\lvert M_2\rvert_{}^{}+1)\lVert \phi \rVert_{\mathscr{H}_1^{(j)}\otimes\mathscr{H}_2^{(n)}}^{2} \Bigr)\Bigr]\; .
    \end{split}
  \end{equation}
\end{assumption}

\begin{thm}
  \label{thm:2}
  Assume \ref{ass:5} and \ref{ass:6}. Then $H$ is essentially self
  adjoint on $D(H_{01}\otimes 1)\cap D(H_{02}\otimes 1)\cap
  f_0(\mathscr{H}_1^{(\cdot)}\otimes \mathscr{H}_2^{(\cdot)})$.
\end{thm}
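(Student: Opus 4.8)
The plan is to imitate the reductio ad absurdum of Theorem~\ref{thm:1}, with two modifications dictated by the boundedness from below: the non-real spectral point $\pm i$ is replaced by a real point below the spectrum, and the step where one takes imaginary parts is replaced by an estimate that uses the lower bound of $H$, which moreover controls the (now possibly quartic) diagonal part of $H_I$ from one side. Recall first that a symmetric operator bounded from below is essentially self-adjoint if and only if $\mathrm{Ran}(H+\lambda)$ is dense in $\mathscr{H}$ for one (hence every) $\lambda$ strictly larger than minus its lower bound. So call $-c$ the lower bound of $H$, fix $\lambda>c$, set $\epsilon:=\lambda-c>0$, and suppose $\psi\in\mathscr{H}$ satisfies $\langle\psi,(H+\lambda)\phi\rangle_{\mathscr{H}}=0$ for all $\phi\in D(H_{01}\otimes 1)\cap D(1\otimes H_{02})\cap f_0(\mathscr{H}_1^{(\cdot)}\otimes\mathscr{H}_2^{(\cdot)})$; the goal is $\psi=0$. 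Everything below is carried out inside a fixed sector $\mathscr{H}_1^{(j)}\otimes\Gamma_s(\mathscr{H}_2)$, uniformly in $j$, so I suppress $j$ in the notation.

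Following verbatim the first part of the proof of Theorem~\ref{thm:1} --- introducing the Hilbert spaces $\mathscr{X}_n$ of~\eqref{eq:8}, applying the bound~\eqref{eq:37} of Assumption~\ref{ass:6} and Riesz's representation theorem on $\mathscr{X}_n$, and passing to the limit along an approximating sequence --- one gets $\psi_n\in Q(H_{01}\otimes 1)\cap Q(1\otimes H_{02})$ for every $n$, and the form identity $q_{H_0}(\psi,\phi)+\langle\psi,H_I\phi\rangle_{\mathscr{H}}+\lambda\langle\psi,\phi\rangle_{\mathscr{H}}=0$ for every $\phi$ in the form domain with finitely many particles. Testing this with $\phi=\psi_n$ and running the absorption argument of~\eqref{eq:40}--\eqref{eq:4}, now with the weights $(n+1)^4$ and $(n+1)^2$ coming from~\eqref{eq:37}, one shows $\sum_n(n+\alpha)^{-2}\lVert\psi_n\rVert_{\mathscr{X}_n}^2<\infty$ for all $\alpha>0$; the only change from Theorem~\ref{thm:1} is the exponent $-2$ in place of $-1$, which is exactly the quartic rescaling. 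In particular $\lVert\psi_n\rVert_{\mathscr{X}_n}^2\le C(n+1)^2\sum_{|i|\le 4}\lVert\psi_{n+i}\rVert^2$.

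The heart of the argument is the analogue of~\eqref{eq:15}--\eqref{eq:18}. Since $\lambda$ is real there is nothing to gain from taking imaginary parts; instead test the form identity with $\phi=\psi_{\le n}$ and subtract $\langle\psi_{\le n},H_I\psi_{\le n}\rangle$: using that $q_{H_0}$ is block diagonal in the number of particles one gets $q_H(\psi_{\le n},\psi_{\le n})+\lambda\lVert\psi_{\le n}\rVert^2=-\langle\psi-\psi_{\le n},H_I\psi_{\le n}\rangle$. Here the boundedness from below enters twice. On the one hand, $\psi_{\le n}$ lies in the form domain of the Friedrichs extension of $H$ (checked on the finitely many sectors involved, using that~\eqref{eq:37} makes $H_I$ a bounded perturbation of $H_0^{1/2}$ there), so the left-hand side is $\ge\epsilon\lVert\psi_{\le n}\rVert^2$; on the other hand the same lower bound gives $q_{H_0}(\psi_{\le n},\psi_{\le n})+\langle\psi_{\le n},H_I\psi_{\le n}\rangle\ge -c\lVert\psi_{\le n}\rVert^2$, which pins the sign of the possibly quartic diagonal part of $H_I$ and keeps the estimates below from being lossy. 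By~\eqref{eq:34} the right-hand side equals $\sum_{k=1}^{4}\langle\psi_{n+k},H_I\psi_{\le n}\rangle$, each term of which couples $\psi_{n+k}$ only to the $\psi_m$ with $n-3\le m\le n$ and is controlled by~\eqref{eq:37} and the a priori bound on $\lVert\psi_m\rVert_{\mathscr{X}_m}$; altogether
\begin{equation*}
  \epsilon\sum_{i=0}^{n}\lVert\psi_i\rVert^2\ \le\ C(n+1)^2\sum_{|i|\le 5}\lVert\psi_{n+i}\rVert^2\qquad(n\in\mathds{N})\; .
\end{equation*}

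This is where the argument genuinely parts company with Theorem~\ref{thm:1}: dividing by $(n+1)^2$ and summing over $n$ produces only the \emph{convergent} series $\sum_n(n+1)^{-2}$, so the inequality above, by itself, does not force $\psi=0$. The resolution, in the spirit of \citet{masson1971}, is a second use of the lower bound: one does not sum a single inequality but bootstraps, feeding the positivity of $H$ back into the control of $\lVert\psi_n\rVert_{\mathscr{X}_n}$. Morally, the finite-particle vectors turn out to be \emph{Stieltjes} vectors for $H$ --- rather than merely the quasi-analytic vectors already underlying Theorem~\ref{thm:1} --- and for a semibounded operator a dense set of Stieltjes vectors suffices for essential self-adjointness; concretely, after the bootstrap one recovers a divergent series of the type $\sum_n(n+1)^{-1}$ and concludes $\psi=0$ exactly as before. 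I expect the main obstacle to be precisely this last step: since Assumption~\ref{ass:6} is stated only through forms and the finite-particle domain need not be invariant under $H$, one cannot simply quote the abstract Stieltjes-vector theorem but must reproduce its degree-halving mechanism at the level of the forms and of the recursion for $\lVert\psi_n\rVert_{\mathscr{X}_n}$, using the two-sided control of $H_I$ that the lower bound of $H$ provides.
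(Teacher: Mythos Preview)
The paper itself does not give a detailed proof of Theorem~\ref{thm:2}; it only states that ``the proof of Theorem~\ref{thm:1} can be adapted to Theorem~\ref{thm:2}, making use of the inferior bound for $H$,'' and points to \citet{masson1971} for the mechanism. Your outline is consistent with, and considerably more explicit than, this sketch: the replacement of $z$ with a real $\lambda$ below the spectrum, the form identity tested against $\psi_{\le n}$, the use of $q_H(\psi_{\le n},\psi_{\le n})+\lambda\lVert\psi_{\le n}\rVert^2\ge\epsilon\lVert\psi_{\le n}\rVert^2$ in place of the imaginary-part trick, and the a~priori control $\sum_n(n+\alpha)^{-2}\lVert\psi_n\rVert_{\mathscr{X}_n}^2<\infty$ are all the right analogues.

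You have also correctly located the genuine obstacle --- that the naive quartic estimate produces only $\sum(n+1)^{-2}<\infty$ --- and correctly attributed its resolution to the Masson--McClary degree-halving idea exploiting semiboundedness. What remains a gap is that you describe this last step only heuristically (``Stieltjes vectors,'' ``bootstrap,'' ``recovers a divergent series'') without carrying it out; and, as you yourself note, the abstract Stieltjes/semi-analytic vector theorem cannot be quoted directly because Assumption~\ref{ass:6} is formulated at the form level and the finite-particle core is not obviously $H$-invariant. Since the paper is equally silent on the details here, your proposal matches the paper's level of rigor; but to turn it into a complete proof you would need to make explicit how the lower bound of $H$ feeds back into the recursion for $\lVert\psi_n\rVert$ (or $\lVert\psi_n\rVert_{\mathscr{X}_n}$) so as to improve the $(n+1)^2$ weight to $(n+1)$.
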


\begin{remark*}
  An attempt to extend the results of \citep{masson1971} can be
  found in \citep{arai1991}. Theorem~\ref{thm:2} is a generalization
  of both: it can be applied to more singular situations and a more
  general class of spaces.
\end{remark*}

The proof of Theorem~\ref{thm:1} can be adapted to
Theorem~\ref{thm:2}, making use of the inferior bound for $H$.  We
remark that Assumption~\ref{ass:5}, by itself, implies that $H$ has at
least one self-adjoint extension: it may be tricky to prove for
general operators. Theorem~\ref{thm:2} essentially states that for
regular enough quartic interactions, existence of a particular
self-adjoint extension (the Friedrichs one) is equivalent to its
uniqueness. It may have interesting applications in CQFT: e.g. the
$d$-dimensional (bounded from below) $Y_d$ and
$(\lambda\varphi(x)^4)_d$ models with cut offs have interactions that
are at most quartic and regular. It is our hope that the ideas
utilized in this paper could contribute to improve the mathematical
insight on interacting quantum field theories, and could be developed
to study self-adjointness of more singular systems.

\begin{acknowledgments}
  This work has been supported by the Centre Henri Lebesgue (programme
  ``Investissements d'avenir'' --- ANR-11-LABX-0020-01). The author
  would like to thank Giorgio Velo, that has suggested to him the idea
  of a direct proof of self-adjointness on Fock spaces. Also, he would
  like to thank Zied Ammari and Francis Nier for precious advices and
  stimulating discussions during the redaction of the paper.
\end{acknowledgments}


\end{document}